\newcommandx{\at}[2][1=]{\todo[linecolor=red,backgroundcolor=red!25,bordercolor=red,#1]{#2}}
\DeclarePairedDelimiter\ceil{\lceil}{\rceil}
\def\AA{A^T\!A}
\def\BB{B^T\!B}
\def\RR{R^T\!R}
\def\YY{Y^T\!Y}
\DeclareMathOperator*{\argmin}{\arg\!\min}
\DeclareMathOperator{\R}{\mathbb{R}}
\newcommand{\ignore}[1]{}
\title{Randomized algorithms for Tikhonov regularization in Linear Least Squares
}
\author{Maike Meier \and Yuji Nakatsukasa\thanks{Mathematical Institute, University of Oxford, Oxford, OX2 6GG, UK. (\email{meier@maths.ox.ac.uk, nakatsukasa@maths.ox.ac.uk})}. The first author is supported by the Oxford-Wang Graduate Scholarship.}
\begin{document}
\maketitle

\begin{abstract}
We describe two algorithms to efficiently solve regularized linear least squares systems based on sketching.  The algorithms compute preconditioners for $\min \|Ax-b\|^2_2 + \lambda \|x\|^2_2$, where $A\in\R^{m\times n}$ and $\lambda>0$ is a regularization parameter, such that LSQR converges in $\mathcal{O}(\log(1/\epsilon))$ iterations for $\epsilon$ accuracy. We focus on the context where the optimal regularization parameter is unknown, and the system must be solved for a number of parameters $\lambda$. Our algorithms are applicable in both the underdetermined $m\ll n$ and the overdetermined $m\gg n$ setting.  Firstly, we propose a Cholesky-based sketch-to-precondition algorithm that uses a `partly exact' sketch, and only requires one sketch for a set of $N$ regularization parameters $\lambda$.  The complexity of solving for $N$ parameters is $\mathcal{O}(mn\log(\max(m,n)) +N(\min(m,n)^3 + mn\log(1/\epsilon)))$.  Secondly, we introduce an algorithm that uses a sketch of size $\mathcal{O}(\text{sd}_{\lambda}(A))$ for the case where the statistical dimension $\text{sd}_{\lambda}(A)\ll\min(m,n)$. The scheme we propose does not require the computation of the Gram matrix, resulting in a more stable scheme than existing algorithms in this context.  We can solve for $N$ values of $\lambda_i$ in  $\mathcal{O}(mn\log(\max(m,n)) + \min(m,n)\,\text{sd}_{\min\lambda_i}(A)^2 + Nmn\log(1/\epsilon))$ operations. 
\end{abstract}

\begin{keywords}
linear least squares, overdetermined system, underdetermined system, preconditioning, tikhonov regularization, ridge regression, iterative method, LSQR, randomized algorithm
\end{keywords}

\begin{AMS}
 Primary  65F08; Secondary 65F22, 68W20
\end{AMS}

\section{Introduction}\label{sec:intro}
Tikhonov regularization is a regularization technique for linear least squares (LLS) problems. Consider the LLS problem
\begin{equation}\label{eq:LLs}
	\min_{x\in\R^n}\|Ax - b\|_2^2,
\end{equation}
where $A\in\R^{m\times n}$ and $b\in\R^{m}$.  In this paper we consider both the case $m \gg n$ (overdetermined) and $n \gg m$ (underdetermined). 

If the design matrix $A$ in \eqref{eq:LLs} is ill-conditioned, or essentially always in the underdetermined case, it may be necessary to regularize the problem before solving it numerically. In particular, the underdetermined problem without regularization leads to non-unique solutions and ill-conditioning in the design matrix could result in numerical errors or excessive computing times.  More importantly, the ill-conditioning can result in amplification of the (e.g. measurement or approximation) error in $b$ (and $A$)~\cite{Cohen2013}.
The most common form of regularization, Tikhonov regularization (also known as ridge regression), transforms the LLS problem \eqref{eq:LLs} to
\begin{equation}\label{eq:regLLS}
	\min_{x\in\R^n}\|Ax - b\|^2_2 + \lambda \|x\|_2^2,
\end{equation}
for a regularization parameter $\lambda > 0$ \cite{Bjorck1996NumericalProblems}. We denote the minimizer to \eqref{eq:regLLS} for a particular $\lambda$ by $x_{\lambda}$. 

For overdetermined problems we can transform the regularized problem to a standard LLS formulation by considering
\begin{equation}\label{eq:regLLSover}
    \min_{x\in\R^{n}}\|Bx - \hat{b}\|_2^2,\quad B = \begin{bmatrix}A \\ \sqrt{\lambda}I_n\end{bmatrix}, \quad \hat{b} = \begin{bmatrix}b \\ 0 \end{bmatrix},
\end{equation}
where $B\in\R^{(m+n)\times n}$ and $b\in\R^{m+n}$. Similarly, for the underdetermined problem we can equivalently to \eqref{eq:regLLS} find the minimum-norm solution to the problem~\cite{Cohen2013}
\begin{equation}\label{eq:regLLSunder}
     \min_{x\in\R^n,\,y\in\R^m}\left\|D\begin{bmatrix}
    x\\ y 
    \end{bmatrix} - b\right\|^2_2, \quad D = \begin{bmatrix}
    A & \sqrt{\lambda}I_m
    \end{bmatrix}.
\end{equation}
The solution $x_{\lambda}$ to \eqref{eq:regLLS} is then given by the top $n$ coordinates $x$ of the solution that minimizes \eqref{eq:regLLSunder}. 

The conditioning of the problem improves for larger $\lambda$ (while the norm $\|x\|$ of the solution for \eqref{eq:regLLS} decreases), and $x$ tends to the solution of \eqref{eq:LLs} as $\lambda$ tends to zero. The optimal regularization parameter is usually unknown a priori and is to be determined in an ad hoc manner, for instance by considering an L-curve~\cite{Hansen2001TheProblems}. This involves solving \eqref{eq:regLLS} for a number of regularization parameters $\lambda_1,\dots,\lambda_N$. We consider this context and aim to solve for this number of parameters efficiently.

In this work we build on the vast body of previous
 work on randomized numerical linear algebra for (regularized) least squares problems. In particular, we present algorithms to design preconditioners for the problems \eqref{eq:regLLSover} and \eqref{eq:regLLSunder} based on sketching.  Sketching is a method that multiplies a matrix with a smaller random matrix to obtain a lower-dimensional matrix that preserves as much information of the original matrix as possible.  The sketch-to-precondition framework was made widely known through the work of Rokhlin and Tygert in 2008~\cite{Rokhlin2008}. A fast implementation was later discussed in \cite{Avron2010a}. The main idea is to sketch \cite{Woodruff2014} the design matrix $A$ and use the information from the sketch to find a preconditioner.  Our algorithms are also based on this idea, but particularly designed for the context of Tikhonov regularization.

The first algorithm we introduce is closely related to the classic sketch-to-precondition work. The two main distinguishing contributions are 1) the use of a `partly exact'~\cite{Avron2017} sketch specific to Tikhonov regularization and 2) the use of the more efficient Cholesky decomposition instead of the QR decomposition or the singular value decomposition.  The Cholesky decomposition is especially efficient in the regularization context because for each additional value of $\lambda$ there is less work involved as compared to the QR decomposition.  This will be discussed in more detail in the next section. 

We secondly introduce an algorithm that can efficiently tackle the situation where the matrix $A$ has rapidly decaying singular values.  The quantity of interest in this case,  in the context of Tikhonov regularization, is the \emph{statistical dimension} (or degrees of freedom).
\begin{definition}[Statistical dimension]\label{def:statdim} For $\lambda\geq 0$ and a rank-$k$ matrix $A$ with singular values $\sigma_1(A),\dots,\sigma_k(A)$, the quantity
\begin{equation}
    \text{sd}_{\lambda}(A)= \sum_{i=1}^k\frac{1}{1 + \frac{\lambda}{\sigma_i(A)^2}}
\end{equation}
is the statistical dimension. 
\end{definition}
This has received significant attention in the randomized NLA literature  as recent work \cite{Cohen2016, Avron2017, Avron2017a} shows the sketch size can be of the same order as the statistical dimension, which is bounded above by the rank of the matrix. Thus when $\text{sd}_{\lambda}(A)\ll \min(m,n)$, 
we are able to use a sketch dimension smaller than $\min(m,n)$.  The algorithm we introduce has as two main advantages compared to previous work that 1) it is not necessary to compute the Gram matrix (which could lead to amplified numerical instability), and 2) it requires a decomposition of size proportional to the sketching dimension instead of $m$ or $n$. 

In the next section we introduce both of these algorithms generally and in Section \ref{sec:relatedwork} we discuss related work. 

\subsection{Our contribution}

A naive randomized approach to solving \eqref{eq:regLLS} for multiple values of $\lambda$ would be to solve the problem from scratch for each $\lambda_i$. Our approach is to sketch $A$ only once and reuse this sketch for each of the $\lambda_i$.  Reusing sketches in Tikhonov regularization was similarly suggested in~\cite{Meng2014a} for sketch-to-precondition, and follows naturally in various works~\cite{Wang2018,Chen2015FastRegression,Chowdhury2018AnRegression}. Avron et al.  \cite{Avron2017} introduced the name `partly exact' sketching. In our context, a partly exact sketch of the matrix in \eqref{eq:regLLSover}, would be
\begin{equation} \label{eq:sketch1}
\begin{bmatrix} XA \\ \sqrt{\lambda} I_n \end{bmatrix},
\end{equation} 
for an embedding matrix $X\in\R^{s\times m}$, $s\ll m$ (as opposed to $XB$). An embedding matrix is a random matrix that `embeds' a matrix in a lower dimension while preserving as much information as possible. An exact definition is provided in Section \ref{sec:notation}.
 Standard sketch-to-precondition practice is to compute the QR factorization of the sketch \eqref{eq:sketch1} and to then use the $R$ factor as a preconditioner for \eqref{eq:regLLSover}.  The Randomized NLA literature tells us that this would lead to a good preconditioner for an appropriate type of embedding and sketch dimension. 

Our work builds heavily on these concepts. Consider first the case that the statistical dimension is not much smaller than the statistical dimension, so the necessary sketch size would likely be larger than $\min(m,n)$.  We propose to compute the Cholesky factorization of the Gram matrix of the sketch, $R^TR=(XA)^T(XA) + \lambda I$, which we know is possible since the use of regularization makes $B$ numerically full rank, so $R$ can be computed without breakdown~\cite[Ch.~10]{Higham2002}. We show that with high probability the Cholesky factor $R$ is such that $BR^{-1}$, where $B$ is as in \eqref{eq:regLLSover}, is well conditioned. As a result,  if $R$ is used as a preconditioner, LSQR~\cite{Paige1982} converges geometrically.

This approach allows for an easy update for multiple regularization parameters: the preprocessing steps consist of sketching $A$ to obtain $Y = XA$ and computing the Gram matrix $C = \YY$. Say $X\in\R^{s\times m}$, where $n\leq s \ll m$. We can then bound the operations necessary for the preprocessing steps above by $\mathcal{O}(mn\log(m) + sn^2)$.  For each $\lambda_i$ we will consequently only have the cost to compute the Cholesky factor of an $n\times n$ matrix, which has complexity $\mathcal{O}(n^3)$ with a small constant.  Especially for a large number of values of $\lambda$ and/or for a sketching dimension $s$ considerably larger than $n$ (which might be necessary for, for instance, spare embeddings), this can outperform computing the QR factorization of an $s\times n$ matrix in terms of cost. The same argument can also be applied to the underdetermined case; both algorithms are presented in detail in Section 2.

Next consider the situation where the statistical dimension is orders of magnitude smaller than $\min(m,n)$.  Recent work~\cite{Cohen2016, Avron2017, Chowdhury2018AnRegression} shows that one can use a sketch dimension $s$ proportional to the statistical dimension $\text{sd}_{\lambda}(A)$, instead of proportional to $\min(m,n)$, and obtain useful sketches.  Inspired by the Kernel Ridge Regression (KRR) solver~\cite{Avron2017a}, we propose a scheme that results in a preconditioner based on this small sketch.  The main contribution of this work is an algorithm which requires only $\mathcal{O}(T_{\text{sketch}} + \min(m,n)s^2)$ operations to find a preconditioner that is suitable for any value of $\lambda$, where $T_{\text{sketch}}$ is the operations required to sketch $A$.  We are able to do so by computing the SVD of the small sketch $XA$, and consequently using the Woodbury matrix identity and the (truncated) SVD factors of $XA$ to compute a preconditioner with low-rank structure that can be applied cheaply. The algorithm is applicable in both the underdetermined and the overdetermined case.  

Although our work is close in spirit to~\cite{Avron2017a} (translated to an LLS context) and~\cite{Chowdhury2018AnRegression}, these works are concerned with solving the normal equations, thus involving the Gram matrix $\AA$. This results in squaring the condition number (as $\lambda\rightarrow 0$). It is well-known for LLS that solution via the normal equation is unstable, whereas backward stability can be recovered if one works directly with the matrix $A$~\cite[Ch.~20]{Higham2002}. Our approach does this, and finds a preconditioner for $A$ rather than $\AA$. Furthermore, we avoid $\mathcal{O}(\min(m,n)^3)$ work and instead only perform $\mathcal{O}(\min(m,n)\text{sd}_{\lambda}(A)^2)$ operations.

As far as the authors are aware, this is the only sketch-to-precondition algorithm that efficiently makes use of a sketch dimension smaller than $\min(m,n)$ while avoiding the normal equations.  Additionally, we propose a method to estimate $\text{sd}_{\lambda_i}(A)$ for each $\lambda_i$ within our algorithms without any additional cost, based on \cite{Meier2021}.

We finally present convergence results for both algorithms based on the structural conditions presented in Chowdhury et al. (2018) \cite{Chowdhury2018AnRegression}.

In conclusion, the main contributions of this paper are
\begin{itemize}
	\item Computational choices and analysis focused on a context where the optimal regularization parameter is unknown, and we wish to solve \eqref{eq:regLLS} for multiple values of $\lambda$.
	\item A broad treatment that covers underdetermined ($n\gg m$) and overdetermined ($m\gg n$) cases and distinguishes between the cases $\min(m,n) = \mathcal{O}(\text{sd}_{\lambda}(A))$ and $\min(m,n) \gg \text{sd}_{\lambda}(A)$.
	\item The design of a novel algorithm for preconditioning \eqref{eq:regLLS} for problems with statistical dimension much smaller than $\min(m,n)$ based on the Woodbury matrix identity.  It requires one decomposition and can then handle multiple values of $\lambda$. Furthermore, as opposed to previous work, the algorithm finds a preconditioner for $A$ directly instead of for the normal equations. 
\end{itemize}

\subsection{Related work}\label{sec:relatedwork}
The regularized system \eqref{eq:regLLS} can be solved with direct methods based on the QR decomposition in $\mathcal{O}(mn\min(m,n))$ operations. For very large scale systems, this cost is prohibitively large and we must turn to iterative solvers and/or randomized methods.  Krylov subspace-based iterative solvers such as LSQR (which we suggest by default), conjugate gradients and Chebyshev semi-iterative techniques require $\mathcal{O}(mn)$ work per iteration, which could be much less than direct methods if the number of iterations necessary for convergence is $\ll \min(m,n)$. However, the number of iterations needed grows as the condition number of the matrix grows, which can make iterative solvers slow. One solution is to use a preconditioner, which is discussed below. 

There is a wide variety of randomized techniques that can be subdivided in two main categories. First there are sketch-and-solve algorithms which replace the regularized system \eqref{eq:regLLSover} with
\begin{equation}
	\min_{x\in\R^n}\|X(Ax - b)\|^2_2 + \lambda \|x\|_2^2,
\end{equation}
where $X\in\R^{s\times m}$, $s\ll m$, is a random embedding matrix \cite{Drineas2011}. In \cite{Avron2017} it is shown that $s$ can be chosen to be $\tilde{\mathcal{O}}(\text{sd}_{\lambda}(A)/\epsilon)$ to obtain a solution within ($1+\epsilon$) relative residual.  The smaller sketched system may be solved iteratively, for instance with the normal equations or in the dual space for underdetermined problems~\cite{Lu2013FasterTransform}.  Chen et al. \cite{Chen2015FastRegression} propose an algorithm to sketch-and-solve the normal equations efficiently.

Secondly, we distinguish a class of sketch-to-precondition algorithms, which were introduced in Section \ref{sec:intro}.  Here, the system is solved with a deterministic iterative algorithm such as LSQR using a preconditioner that was obtained through sketching. Our algorithms fall into this category. Important classical works include \cite{Rokhlin2008}, \cite{Meng2014a} and \cite{Avron2010a}. Recent high-performance implementations can be found in \cite{Iyer2015ASystems} and \cite{Iyer2017RandomizedProblems}.  This type of algorithm has an $\mathcal{O}(\log(1/\epsilon))$ dependency on the accuracy. The algorithms in these references all require the decomposition of an $s\times \min(m,n)$ matrix, which can be costly.  In this work we propose algorithms that require smaller decompositions.

Recently, a new subclass of sketch-to-precondition algorithms that does not require any decomposition was introduced.  These are iterative methods where sketching is used in each iteration, most notably Iterative Hessian Sketching (IHS) proposed in \cite{Pilanci2016} uses a sketched Hessian and inexact inversion.  Extensions
of this work include Accelarated IHS~\cite{Wang2017a}, Momemtum-IHS \cite{Ozaslan2020, Ozaslan2019}, and Polyak-IHS \cite{Lacotte2020}. Wang et al. discuss more general iterative sketching methods for regularized methods (that might require decompositions) in \cite{Wang2018}, in particular from a statistical perspective. Of works in this category, \cite{Chowdhury2018AnRegression} is closest in spirit to our work. The authors design an iterative algorithm for the underdetermined problem which only requires a sketch (based on leverage scores) of size proportional to the statistical dimension.  A difference is that it does require a $\min(m,n) \times \min(m,n)$ exact inversion.

There are also works in Kernel Ridge Regression (KRR) that are closely related, most notably \cite{Avron2017a} which we discussed above. Chen et al.  \cite{Chen2021AccumulationsRegression} discuss a general framework for sketch-and-solve in KRR and \cite{Avron2017} analyses errors in this context. In \cite{ElAlaoui2020}, the authors propose a sketch-and-solve algorithm based on leverage score sampling and in  \cite{Yang2017RandomizedRegression} a sketch-and-solve algorithm based on a different definition of the statistical dimension is introduced. An iterative sketching algorithm is proposed in \cite{Gazagnadou2021}.

Our paper aims to contribute to the literature in two main ways: firstly by explicitly considering the situation of multiple regularization parameters and making computational choices especially fit for that purpose, and secondly by proposing a preconditioner for the case $\text{sd}_{\lambda}(A) \ll \min(m,n)$ that is computationally efficient for multiple $\lambda$ and avoids the normal equations. Furthermore, our analysis is broad as it allows for the underdetermined and overdetermined case. 

\subsection{Notation}\label{sec:notation}
Throughout this paper $A\in\R^{m\times n}$ is a real matrix that has either $m\gg n$ or $n\gg m$.  A double subscript under a matrix denotes an entry, for instance, $(A)_{ij}$ denotes the $(i,j)$th entry of $A$. For vectors, $\|a\|_2$ denotes the Eucledian norm; for matrices $\|A\|_2 = \|A\|$ denotes the spectral norm and $\|A\|_F$ the Frobenius norm. We let $X$ denote an embedding matrix. That is, for $X\in\R^{s\times m}$, $s\ll m$, $X$ is a random matrix such that $\|U^TX^TXU - I\|_2 \leq \epsilon$ with high probability for matrices $U$ with orthonormal columns and $0 < \epsilon < 1$. In general, $s$ denotes the sketch dimension.

\section{Randomized Tikhonov regularization with Cholesky}

We start with the exposition of a scheme that fits the classical regime; $s\geq\min(m,n)$, where $s$ is the dimension of the sketch. This would be the setting when the singular values of $A$ are not decaying rapidly, and so $\min(m,n)$ is close to $\text{sd}_{\lambda}(A)$.
The algorithm we propose varies from most of the existing literature in two main ways. Firstly, we use a `partly exact' sketch to improve the efficiency of computing for multiple $\lambda$, and secondly, we propose a Cholesky-decomposition based computation as opposed to a QR decomposition based computation.  The latter is possible since the matrices $B$ from \eqref{eq:regLLSover} and $D$ from \eqref{eq:regLLSunder} are not very ill-conditioned because of the regularization. 

In this section we propose two algorithms, one for the overdetermined case and one for the underdetermined case. We treat these cases seperately and discuss the problem setting, the algorithm, and the convergence analysis in each case.
\subsection{Overdetermined case ($m \gg n$)}
Let us start with specifying the problem.
We aim to find preconditioners for the system \eqref{eq:regLLSover} for multiple regularization parameters $\lambda_1,\dots,\lambda_N$. In particular, by denoting $$B_i = \begin{bmatrix} A \\ \sqrt{\lambda_i} I_n\end{bmatrix}, \quad i = 1,\dots, N,$$
we find preconditioners $R_1, \dots, R_N$ such that $\kappa(B_iR_i^{-1}) = \mathcal{O}(1)$.  Having obtained such an $R_i$,  the solution to \eqref{eq:regLLSover} is obtained with an iterative solver. As a default we use LSQR. The system that is solved iteratively is
\begin{equation}\label{eq:regLLSoverIter}
    y^* = \argmin_{y\in\R^{n}} \|B_iR_i^{-1}y - \hat{b}\|^2_2, \quad x^*_{\lambda_i} = R_i^{-1}y.
\end{equation}
Here, $\hat{b} = [b^T,\, 0]^T\in\R^{m+n}$.

\subsubsection{Algorithm}\label{sec:choloveralg}
The algorithm to compute $R_i$ for $i=1,\dots, N$ is straightforward, as mentioned in Section 1.1. We sketch the matrix $A$ once from the left to obtain a matrix $Y=XA\in\R^{s\times n}$. Usually, in the context we are considering, $s\geq n$, but the algorithm could be applied in the case $s<n$.\footnote{However, we would recommend using the algorithms discussed in Section 3 in this case.} The next preprocessing step is to compute the Gram matrix of $Y$: $C = \YY$.  Note that if $A$ is ill-conditioned, $C$ will also be ill-conditioned: as sketching $A$ roughly preserves the largest and smallest singular values, the condition number is also roughly preserved.

The work involved in these steps can be bounded above by $\mathcal{O}(mn\log(m) + sn^2)$, where the $\mathcal{O}(mn\log(m))$ operations corresponds to sketching $A$ with a subsampled randomized trigonometric transform (SRTT) embedding matrix~\cite{Martinsson2020}.  See below for a discussion on various embedding matrices.

Having obtained $C$,  we loop through the values of $\lambda$ we wish to compute $x_{\lambda}^*$ for. For each $\lambda_i$, the conditioner $R_i$ is obtained as the Cholesky factor of the positive definite matrix $C + \lambda_i I$. Note this is the 'partly exact' sketched Gram matrix of $B$ or $D$. As is well-known, the Cholesky decomposition breaks down for large condition number. However, because we are considering a regularized problem we make the mild assumption $\kappa_2(C + \lambda_i I)<\mathcal{O}(u^{-1})$. The cost of computing the Cholesky decomposition is $\mathcal{O}(n^3)$, more precisely $\frac{1}{3}n^3$ flops~\cite{Golub2013}. The resulting algorithm is shown in Algorithm \ref{alg:rankCholTikhonovover}.

\begin{algorithm2e}
\SetAlgoLined
\KwResult{Given an $m\times n$ matrix $A$, $m> n$, a set of regularization parameters $\lambda_1,\lambda_2,\dots, \lambda_N$,and a sampling parameter $s<m$, this algorithm computes approximate solutions $x_{\lambda_i}^*$ to \eqref{eq:regLLSoverIter}.}
\nl Draw an $s\times m$ random embedding matrix $X$. \\
\nl Compute $Y = XA$. \\
\nl Compute $C = Y^TY$. \\
 \For{$i = 1,2,\dots, N$} {
\nl Compute $R_i = \text{chol}\,(C + \lambda_i I)$. \\
\nl Solve the following system with LSQR
 \begin{equation}\label{eq:lsqrsolve_1}
     y_i^* = \argmin_{y\in\R^n}\left\|\begin{bmatrix} A\\ \sqrt{\lambda_i}I_n\end{bmatrix}R_i^{-1}y - \begin{bmatrix} b\\ 0\end{bmatrix}\right\|^2.
 \end{equation}
 \nl Set $x_{\lambda_i}^* = R_i^{-1}y_i^*$
}
\caption{Randomized Cholesky for Tikhonov regularization in overdetermined LLS.}
\label{alg:rankCholTikhonovover}
\end{algorithm2e}

The full complexity of the algorithm is $\mathcal{O}(mn\log m +sn^2 + N(n^3 + mn\log(1/\varepsilon)))$ to solve \eqref{eq:regLLSoverIter} for $i=1,\dots, N$ to relative accuracy $\varepsilon$.  We note that solving the $N$ problems in the for loop can trivially be parallelized.

There are different choices for the embedding matrix possible. An SRTT matrix is a specific type of embedding that can be applied quickly. In particular, if $X\in\R^{s\times m}$, $s\ll m$, is an SRTT then it has the form $$X = \sqrt{\frac{m}{s}}SFD,$$
where $S\in\R^{s\times m}$ is a subsampling matrix --- its rows are a random subset of the rows of the $m\times m$ identity matrix ---, $F\in\R^{m\times m}$ is an orthogonal trigonometric transform, such as a discrete cosine or Hadamard transform, and $D\in\R^{m\times m}$ is a diagonal matrix of independent random signs.  SRTTs can be applied in $\mathcal{O}(mn\log(m))$ operations. Other types of embedding matrices include Gaussian matrices, with has each element an independent normal random variable, or a sparse embedding which has one nonzero element in each column. These matrices can be applied in respectively $\mathcal{O}(mns)$ and $\mathcal{O}(\text{nnz}(A))$ operations. The quality of an embedding dictates how large the sketching dimension must be for sufficiently accurate results, see~\cite{Halko2011,  Martinsson2020,  Woodruff2014} for discussions.

\subsubsection{Convergence analysis}
Throughout this paper, we propose convergence proofs based on the structural conditions presented in \cite{Chowdhury2018AnRegression}. These conditions are on the quality of the embedding matrix, in particular how well the relevant information in $A$ is preserved.  For the case $s\geq n$, the relevant condition is $$ \|U_A^TX^TXU_A-I_n\|_2 \leq \tilde{\epsilon},$$
for some $0<\tilde{\epsilon}<1$. Here, $U_A$ is the matrix consisting of the left singular vectors of $A$. We use the closely related 
condition on the singular values of $XU_A$:
$$1 - \epsilon \leq \sigma_{\min}(XU_A)\leq\sigma_{\max}(XU_A) \leq 1 + \epsilon.$$
It results in the following lemma on the condition number of $BR^{-1}$. It is important to note that Algorithm \ref{alg:rankCholTikhonovover} outputs preconditioners $R_i$ that satisfy (in exact arithmetic)
$$R_i^TR_i = A^TX^TXA + \lambda_i I_n.$$

\begin{lemma}\label{lemma:convcholover}
Let $A\in\R^{m\times n}$, $m > n$ have SVD $A = U_A\Sigma_A V_A^T$, where $U_A\in\R^{m\times n}$, and let $\lambda > 0$.
Let $X\in\R^{s\times m}$, $m > s \geq n$, be a matrix such that $1-\epsilon < \sigma_{\min}(XU_A)<\sigma_{\max}(XU_A) < 1 + \epsilon$ for some $0<\epsilon <1$. Suppose $R\in\R^{n\times n}$ is an upper triangular matrix such that
$$ \RR = A^TX^TXA + \lambda I_n.$$ Then
$$\kappa_2(BR^{-1}) \leq \frac{1 + \epsilon}{1-\epsilon},$$
where
\begin{equation}  \label{eq:defB}
B = \begin{bmatrix} A\\ \sqrt{\lambda}I_n\end{bmatrix}.  
\end{equation}
\end{lemma}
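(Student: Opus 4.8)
The plan is to reduce the condition-number bound to a spectral equivalence between the Gram matrices $B^TB$ and $R^TR$, and then to exploit the structure $B^TB = A^TA + \lambda I_n$ together with the defining relation $\RR = A^TX^TXA + \lambda I_n$. Since $\kappa_2(BR^{-1})^2$ equals the ratio of the largest to the smallest eigenvalue of $R^{-T}B^TB R^{-1}$, it suffices to sandwich $R^TR$ between scalar multiples of $B^TB$; concretely, if I can show
\begin{equation*}
(1-\epsilon)^2\, B^TB \preceq \RR \preceq (1+\epsilon)^2\, B^TB,
\end{equation*}
then inverting and conjugating by $R^{-1}$ places the eigenvalues of $R^{-T}B^TB R^{-1}$ inside $[(1+\epsilon)^{-2},(1-\epsilon)^{-2}]$, whence $\kappa_2(BR^{-1}) \leq (1+\epsilon)/(1-\epsilon)$.

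To obtain the sandwich I would argue with quadratic forms. For an arbitrary $v\in\R^n$, set $w = \Sigma_A V_A^T v$, so that $v^T\!A^TAv = \|Av\|_2^2 = \|w\|_2^2$ (using that $U_A$ has orthonormal columns) and $v^T A^TX^TXA v = \|XU_A w\|_2^2$. The singular-value hypothesis $1-\epsilon < \sigma_{\min}(XU_A) \le \sigma_{\max}(XU_A) < 1+\epsilon$ immediately gives $(1-\epsilon)^2\|w\|_2^2 \le \|XU_A w\|_2^2 \le (1+\epsilon)^2\|w\|_2^2$, i.e. the desired two-sided comparison between the "data" terms $v^T A^TX^TXA v$ and $v^T\!A^TAv$.

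The step I expect to be the crux is carrying this equivalence from the data terms to the full regularized Gram matrices, i.e. correctly accounting for the $\lambda I_n$ contribution. Adding $\lambda\|v\|_2^2$ to both sides, I want $v^T\RR v = \|XU_A w\|_2^2 + \lambda\|v\|_2^2$ to remain within the factors $(1\mp\epsilon)^2$ of $v^T B^TB v = \|w\|_2^2 + \lambda\|v\|_2^2$. This works precisely because $(1-\epsilon)^2 \le 1 \le (1+\epsilon)^2$: the regularization term can be bounded as $\lambda\|v\|_2^2 \ge (1-\epsilon)^2\lambda\|v\|_2^2$ on the lower side and $\lambda\|v\|_2^2 \le (1+\epsilon)^2\lambda\|v\|_2^2$ on the upper side, so the \emph{same} constants that control the data term also absorb the regularization term. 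Combining the two contributions term by term yields the sandwich for every $v$, hence the stated bound. I would close by noting that $\lambda>0$ makes $R^TR$ positive definite, so $R^{-1}$ is well defined and all expressions above are legitimate.
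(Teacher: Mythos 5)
Your proof is correct and follows essentially the same route as the paper's: both establish a two-sided spectral equivalence between $R^TR$ and $B^TB$ and then read off the condition number from the eigenvalues of $R^{-T}B^TB R^{-1}$; your quadratic-form argument with the substitution $w=\Sigma_A V_A^T v$ is just the scalar restatement of the paper's Loewner-order manipulation, which multiplies $\sigma_{\min}(XU_A)^2 I \preceq U_A^TX^TXU_A \preceq \sigma_{\max}(XU_A)^2 I$ by $V_A\Sigma_A$ on both sides. One point in your favor: the paper absorbs the $\lambda I_n$ term using the constants $\sigma_{\min}(XU_A)^2$ and $\sigma_{\max}(XU_A)^2$, which forces it to assume mid-proof that $\sigma_{\min}(XU_A)<1<\sigma_{\max}(XU_A)$ --- a condition not implied by the lemma's hypotheses --- whereas your absorption via $(1-\epsilon)^2 \leq 1 \leq (1+\epsilon)^2$ holds unconditionally for $0<\epsilon<1$, so your handling of the regularization term is the cleaner and strictly more rigorous one.
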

\begin{proof}
The proof is inspired by that of \cite[Lem.~15]{Chowdhury2018AnRegression}. We write $S\preceq T$ for Hermitian matrices $S$ and $T$ imply that $S-T$ is negative semi-definite . We start from the fact that $\sigma_i(XU_A)^2 = \lambda_i(U_A^TX^TXU_A)$, where $\lambda_i$ denotes the $i$th greatest eigenvalue. Then
$$ \sigma_{\min}(XU_A)^2 I \preceq U_A^TX^TXU_A \preceq\sigma_{\max}(XU_A)^2  I.$$
Multiply this expression by $V_A\Sigma_A$ from the left and $(V_A\Sigma_A)^T$ from the right to find
$$ \sigma_{\min}(XU_A)^2\AA \preceq A^TX^TXA \preceq\sigma_{\max}(XU_A)^2  \AA,$$
which is equivalent to
$$ \sigma_{\min}(XU_A)^2\AA + \lambda I\preceq A^TX^TXA + \lambda I \preceq\sigma_{\max}(XU_A)^2  \AA+ \lambda I.$$
Under the assumption that $\sigma_{\min}(XU_A)<1<\sigma_{\max}(XU_A)$ this implies
$$ \sigma_{\min}(XU_A)^2(\AA + \lambda I)\preceq A^TX^TXA + \lambda I \preceq\sigma_{\max}(XU_A)^2  (\AA+ \lambda I).$$
We can rephrase this as
$$ \sigma_{\min}(XU_A)^2\BB\preceq \RR \preceq\sigma_{\max}(XU_A)^2  \BB.$$
It follows that
$$\sigma_{\max}(XU_A)^{-2}I\preceq(BR^{-1})^T(BR^{-1})  \preceq\sigma_{\min}(XU_A)^{-2} I.$$
This finally implies
$$\sigma_{\max}(XU_A)^{-1}\leq\sigma_{\min}(BR^{-1})\leq \sigma_{\max}(BR^{-1})\leq \sigma_{\min}(XU_A)^{-1},$$
so we have
$$\kappa_2(BR^{-1}) \leq \kappa_2(XU_A),$$
as required.
\end{proof}
It is well-known that LSQR applied to a well-conditioned LLS converges geometrically with respect to the number of iterations. It follows from the above result that we obtain an $\varepsilon$-accuracy solution for~\eqref{eq:lsqrsolve_1} with $\mathcal{O}(\log(1/\varepsilon)$ iterations of LSQR. Note that $\varepsilon$ is different from $\epsilon$ in the condition of Lemma \ref{lemma:convcholover}. In practise, $\epsilon$ is usally not much smaller than 1, say 0.5, whereas $\varepsilon$ can be much smaller.

\subsection{Underdetermined case ($n\gg m)$}
We next turn to the underdetermined case \eqref{eq:regLLSunder}. Let us first summarise the problem set-up: we aim to find preconditioners $R_i$ for $$D_i = \begin{bmatrix} A & \sqrt{\lambda_i} I_m\end{bmatrix}, \quad i = 1,\dots, N$$
such that $\kappa(R_i^{-T}D_i) = \mathcal{O}(1)$. We then use an iterative solver to find the minimum-norm solution to
\begin{equation}\label{eq:regLLSunderiter}
    \begin{bmatrix} x^*_{\lambda_i} \\ y^* \end{bmatrix} = \argmin_{x\in\R^n, y\in\R^{m}} \left\|R_i^{-T}D_i\begin{bmatrix} x \\ y \end{bmatrix} - R_i^{-T}b\right\|^2_2.
\end{equation}
The first $n$ coordinates of the minimum-norm minimizer of \eqref{eq:regLLSunderiter}, $x^*_{\lambda_i}$, is the approximate solution.
\subsubsection{Algorithm}
The idea for the algorithm is very similar to Section \ref{sec:choloveralg}. The main difference is that now we sketch from the right to obtain $Y = AX\in\R^{m\times s}$, for $s\ll n$ and work with the Gram matrix $C = YY^T\in\R^{m\times m}$. The preconditioners $R_i$ are the Cholesky factors such that
$$ R_i^TR_i = C + \lambda_i I_m = (AX)(AX)^T + \lambda_i I_m.$$
The cost of preprocessing can be bounded above by $\mathcal{O}(mn\log(n) + sm^2)$ operations. The Cholesky decomposition has $\mathcal{O}(m^3)$ complexity. The resulting algorithm is presented in \ref{alg:rankCholTikhonovunder}.

\begin{algorithm2e}
\SetAlgoLined
\KwResult{Given an $m\times n$ matrix $A$, $n > m,$, a set of regularization parameters $\lambda_1,\lambda_2,\dots, \lambda_N$,and a sampling parameter $s<n$, this algorithm computes approximate solutions $x_{\lambda_i}^*$ to \eqref{eq:regLLSunderiter}.}
\nl Draw $n\times s$ random embedding matrix $X$. \\
\nl Compute $Y = AX$. \\
\nl Compute $C = YY^T$. \\
 \For{$i = 1,2,\dots, N$} {
\nl Compute $R_i = \text{chol}\,(C + \lambda_i I_m)$. \\
\nl Compute the min-length solution to the following system with an iterative solver
 \begin{equation*}
    \begin{bmatrix} x^*_{\lambda_i} \\ y^*_i \end{bmatrix} = \argmin_{x\in\R^n, y\in\R^{m}} \|R_i^{-T}\begin{bmatrix} A & \sqrt{\lambda_i}I \end{bmatrix}\begin{bmatrix} x \\ y \end{bmatrix} - R_i^{-T}b\|^2_2.
\end{equation*} \\
\nl Return $x^*_{\lambda_i}$.
}
\caption{Randomized Cholesky for Tikhonov regularization in underdetermined LLS.}
\label{alg:rankCholTikhonovunder}
\end{algorithm2e}

The complexity of the algorithm is $\mathcal{O}(mn\log n + sm^2 + N(m^3 + mn\log(1/\varepsilon)))$ to obtain relative accuracy $\varepsilon$. 

\subsubsection{Convergence analysis}
We again present a result on the condition number of the preconditioned matrix in terms of the quality of the sketching matrix. Specifically,  we consider the singular values of $X^TV_A$ where $V_A$ is the matrix consisting of the right singular vectors of $A$.
\begin{lemma}\label{lemma:convcholunder}
Let $A\in\R^{m\times n}$, $n > m$ have SVD $A = U_A\Sigma V_A^T$, where $V_A\in\R^{n\times m}$ and let $\lambda > 0$.
Let $X\in\R^{n\times s}$, $n > s > m$, be an embedding matrix such that $1 - \epsilon < \sigma_{\min}(X^TV_A)\leq\sigma_{\max}(X^TV_A) < 1 + \epsilon$ for some $0<\epsilon <1$.  Suppose $R\in\R^{n\times n}$ is an upper triangular matrix such that
$$ \RR = AXX^TA^T + \lambda I_m.$$ Then
$$\kappa_2(R^{-T}D) \leq \frac{1 + \epsilon}{1-\epsilon},$$
where 
\begin{equation}  \label{eq:defD}
D = \begin{bmatrix} A& \sqrt{\lambda}I_m\end{bmatrix}.  
\end{equation}
\end{lemma}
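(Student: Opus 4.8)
The plan is to mirror the proof of Lemma~\ref{lemma:convcholover}, exploiting the natural duality between the overdetermined and underdetermined settings. The key object is now $X^TV_A$ rather than $XU_A$, and the roles of $A$ and $A^T$ are swapped. First I would start from the singular value bound on $X^TV_A$, rewritten as an eigenvalue sandwich: since $\sigma_i(X^TV_A)^2 = \lambda_i(V_A^TXX^TV_A)$, the hypothesis gives
\begin{equation*}
\sigma_{\min}(X^TV_A)^2 I_m \preceq V_A^TXX^TV_A \preceq \sigma_{\max}(X^TV_A)^2 I_m.
\end{equation*}

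Next I would conjugate this inequality by $U_A\Sigma$ from the left and its transpose from the right. Because $A = U_A\Sigma V_A^T$, multiplying yields $\sigma_{\min}^2\, AA^T \preceq AXX^TA^T \preceq \sigma_{\max}^2\, AA^T$, the analogue of the step in Lemma~\ref{lemma:convcholover} but with the Gram matrix $AA^T$ in place of $\AA$. Adding $\lambda I_m$ to each side and using $\sigma_{\min}(X^TV_A)<1<\sigma_{\max}(X^TV_A)$ to absorb the $\lambda I_m$ term into the scaled factors gives
\begin{equation*}
\sigma_{\min}(X^TV_A)^2\,(AA^T+\lambda I_m) \preceq AXX^TA^T+\lambda I_m \preceq \sigma_{\max}(X^TV_A)^2\,(AA^T+\lambda I_m).
\end{equation*}
Since $DD^T = AA^T + \lambda I_m$ and $\RR = AXX^TA^T + \lambda I_m$ by hypothesis, this reads $\sigma_{\min}^2\, DD^T \preceq \RR \preceq \sigma_{\max}^2\, DD^T$.

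Finally I would invert this sandwich to control the preconditioned matrix. The relevant object here is $R^{-T}D$, so I want to bound the eigenvalues of $(R^{-T}D)(R^{-T}D)^T = R^{-T}(DD^T)R^{-1}$. From $\sigma_{\min}^2\, DD^T \preceq \RR \preceq \sigma_{\max}^2\, DD^T$, conjugating by $R^{-T}$ and $R^{-1}$ gives $\sigma_{\max}(X^TV_A)^{-2} I_m \preceq R^{-T}(DD^T)R^{-1} \preceq \sigma_{\min}(X^TV_A)^{-2} I_m$, whence $\sigma_{\max}^{-1}\leq \sigma_{\min}(R^{-T}D) \leq \sigma_{\max}(R^{-T}D)\leq \sigma_{\min}^{-1}$ and therefore $\kappa_2(R^{-T}D)\leq \kappa_2(X^TV_A)\leq \frac{1+\epsilon}{1-\epsilon}$.

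The only genuine subtlety, as in the overdetermined case, is the transition from $\sigma_{\min}^2\AA + \lambda I \preceq \cdots$ to $\sigma_{\min}^2(\AA + \lambda I)\preceq\cdots$: here one needs $\sigma_{\min}(X^TV_A)^2 \leq 1$ so that $\sigma_{\min}^2 \lambda I \preceq \lambda I$ (and symmetrically $\sigma_{\max}^2 \geq 1$ for the upper bound), which is exactly why the hypothesis $\sigma_{\min}(X^TV_A)<1<\sigma_{\max}(X^TV_A)$ is invoked. I would also be slightly careful that $DD^T = AA^T + \lambda I_m$ is genuinely positive definite so that all the conjugations by $R^{-1}$ are legitimate, but this is immediate from $\lambda>0$. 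Everything else is a routine transcription of the earlier argument with $U_A \leftrightarrow V_A$ and $A \leftrightarrow A^T$, working with right multiplication of the embedding rather than left.
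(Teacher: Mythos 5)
Your proposal is correct and is essentially identical to the paper's own treatment: the paper simply states that Lemma~\ref{lemma:convcholunder} ``follows the exact same steps as the proof of Lemma~\ref{lemma:convcholover}'' and omits it, and your transcription (swapping $U_A \leftrightarrow V_A$, $A \leftrightarrow A^T$, and $\AA \leftrightarrow AA^T$) is exactly that argument spelled out. Note only that your absorption step, like the paper's, invokes $\sigma_{\min}(X^TV_A)<1<\sigma_{\max}(X^TV_A)$, which is not literally implied by $1-\epsilon<\sigma_{\min}\leq\sigma_{\max}<1+\epsilon$; the harmless fix is to pass directly to $(1-\epsilon)^2(AA^T+\lambda I_m)\preceq \RR \preceq (1+\epsilon)^2(AA^T+\lambda I_m)$ using $(1-\epsilon)^2<1<(1+\epsilon)^2$.
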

\begin{proof}
The proof of Lemma \ref{lemma:convcholunder} follows the exact same steps as the proof of Lemma \ref{lemma:convcholover}, and so is omitted. 
\end{proof}

\section{Randomized Tikhonov regularization for problems with small statistical dimension}

A regularized linear least squares problem with an approximately low-rank data matrix, i.e. $A$ has rapidly decreasing singular values, may be solved in a way that takes advantage of this particular structure. Specifically, we may sketch the matrix with an embedding matrix that is smaller than either of the original dimensions. In this section we discuss an algorithm that computes a preconditioner for this context.  

To see why one can be more efficient here, consider the Gram matrix of the sketched data matrix. Say $X$ is an embedding matrix of dimension $s<\min(m,n)$; then $(XA)^T(XA)$ in the overdetermined case or $(AX)(AX)^T$ in the underdetermined case are now exactly low rank matrices. We would prefer to avoid the decomposition of an $\min(m,n)$ by $\min(m,n)$ matrix, as would be necessary in Algorithms 1 and 2, and instead work with the smaller sketch directly.

We take inspiration from Avron et al (2017)~\cite{Avron2017a}, which discusses a sketch-to-precondition algorithm for kernel ridge regression.  The authors use the Woodbury matrix identity to (in our notation for the overdetermined system) use the Cholesky decomposition of $(XA)(XA)^T + \lambda I_s$ to obtain a preconditioner for the normal equations mathematically equivalent to $P = (XA)^T(XA) + \lambda I_n$.  That is, the matrix they are interested in is $P$ whereas we are interested in obtaining a matrix $R$ such that $P = \RR$. This is important to avoid instabilities caused by solving the normal equations (and thus computing the Gram matrix of $A$), as opposed to preconditioning $B$ or $D$ (in~\eqref{eq:defB},\eqref{eq:defD}) directly and solving the LLS problem. This is a notable difference between our work and earlier work, and our experiments illustrate the improved stability.

In this section we assume $A$ to be approximately low rank, with decaying singular values such that $\text{sd}_{\lambda}(A)$ as defined in Definition \ref{def:statdim}, is (much) smaller than $\min(m,n)$. Again, we consider a sequence of regularization parameters $\lambda_1\geq\dots\geq\lambda_N$ for which we aim to solve \eqref{eq:LLs} with an iterative solver.  We propose an algorithm that finds a sequence of preconditioners for either $B_i$ and $D_i$, where
\begin{equation*}
B_i  = \begin{bmatrix}A \\ \sqrt{\lambda_i}I_n\end{bmatrix}, \quad D_i = \begin{bmatrix}A & \sqrt{\lambda_i}I_m\end{bmatrix},  \quad i = 1,\dots,N,
\end{equation*}
in $\mathcal{O}(mn\log(\max(m,n)) + \min(m,n)\text{sd}_{\lambda}(A)^2)$ operations. The preconditioners can be applied to a vector in $\mathcal{O}(\min(m,n)\text{sd}_{\lambda}(A))$ operations.  We first derive the preconditioner in the overdetermined case.

\subsection{Overdetermined case}
We suppose in this subsection that $A\in\R^{m\times n}$ where $m\gg n$, and we fix a $\lambda$ (for the sake of the argument) such that $\text{sd}_{\lambda}(A)\ll n$. We look for a preconditioner for $ B =[ A^T \,\, \sqrt{\lambda}I_n]^T$.
Let $X\in\R^{s\times m}$ be an embedding with $s\ll n\ll m$ and $s = \mathcal{O}(\text{sd}_{\lambda}(A))$.  We use the (short and fat) sketch
\begin{equation*}
    Y = XA \in\R^{s\times n}.
\end{equation*}
Our aim is to find $R$ such that 
\begin{equation}\label{eq:Rlowrankeq}
    \RR = Y^TY + \lambda I_n,
\end{equation}
as we will show this results in a good preconditioner for $B$.  In the previous section we suggested a triangular matrix; now we propose a preconditioner with a low rank structure.
First take the SVD  of $Y$:
\begin{equation*}
    Y = U\Sigma V^T,\quad U\in\R^{s\times s}, \quad \Sigma \in \R^{s\times s}, \quad V\in\R^{n\times s}.
\end{equation*}
Then
\begin{align*}
    Y^TY + \lambda I_n &= V\Sigma^2V^T + \lambda I_n \\
    & = \begin{bmatrix} V & V_{\perp}\end{bmatrix}\begin{bmatrix}
    \Sigma^2 + \lambda I_s & 0 \\ 0 & \lambda I_{n-s}\end{bmatrix}\begin{bmatrix} V^T \\ V_{\perp}^T\end{bmatrix},
\end{align*}
where $V_{\perp}\in\R^{n\times (n-s)}$ is such that $\begin{bmatrix} V & V_{\perp}\end{bmatrix}$ is a square orthogonal matrix. If we let 
\begin{equation}
    R = \begin{bmatrix} V & V_{\perp}\end{bmatrix}\begin{bmatrix}
    (\Sigma^2 + \lambda I_n)^{1/2} & 0 \\ 0 & \sqrt{\lambda} I_{n-s}\end{bmatrix}\begin{bmatrix} V^T \\ V_{\perp}^T\end{bmatrix},
\end{equation}
then $\RR = Y^TY + \lambda I_n$. Note we can reformulate $R$ to not include $V_{\perp}$ as follows
\begin{align*}
    R &= V(\Sigma^2 + \lambda I_n)^{1/2}V^T + \sqrt{\lambda}V_{\perp}V_{\perp}^T \\
    & = V(\Sigma^2 + \lambda I_n)^{1/2}V^T + \sqrt{\lambda}(I_{n} - VV^T) \\
    & =  V[(\Sigma^2 + \lambda I_n)^{1/2} - \sqrt{\lambda}I_s]V^T + \sqrt{\lambda}I_n \\
    & = \sqrt{\lambda}(VFV^T + I_n),
\end{align*}
where 
\begin{equation*}
    F = (\lambda^{-1/2}\Sigma^2 + I_s)^{1/2} - I_s
\end{equation*}
is a diagonal matrix. We are interested in the inverse of $R$. By using the Sherman-Morrison-Woodbury formula we have
\begin{align*}
    (VFV^T + I_n)^{-1} &= I_n - V(F^{-1} + V^TV)^{-1}V^T \\
    & = I_n - VSV^T,
\end{align*}
where $S = (F^{-1} + I_n)^{-1}$ is a diagonal matrix with elements
\begin{align*}
    S_{ii} = 1 - \sqrt{\frac{1}{1 + \frac{\sigma_i(Y)^2}{\lambda}}}\,\,.
\end{align*}
Finally,
\begin{equation}\label{eq:RinvOver}
    R^{-1} = \lambda^{-1/2}(I_n - VSV^T),
\end{equation}
where $V\in\R^{n\times s}$ and $S\in\R^{s\times s}$.  This matrix, although not triangular, has a low-rank structure that allows it to be applied quickly. Most importantly, it is such that \eqref{eq:Rlowrankeq} holds while we avoided the decomposition of an $n\times n$ matrix.

\subsubsection{The algorithm}
In the previous section we focused on the context where we aim to solve \eqref{eq:regLLS} for a sequence of regularization parameters $\lambda_1\geq\dots\geq\lambda_N$. The way Algorithms \ref{alg:rankCholTikhonovover} and \ref{alg:rankCholTikhonovunder} were designed allowed us to sketch the matrix $A$ only once, and reuse the sketch for different values of $\lambda_i$.  For problems with low statistical dimension it is slightly more difficult to do this, because we want our sketch size to depend on $\text{sd}_{\lambda_i}(A)$ which in turn depends on $\lambda_i$.  We propose an algorithm to bypass this problem.

As the statistical dimension increases as $\lambda$ decreases, the largest statistical dimension corresponds to the smallest $\lambda$ --- which we assume $\lambda_N$. Our algorithm sketches $A$ once, with dimension corresponding to $\text{sd}_{\lambda_N}(A)$ and compute the SVD of this sketch.  For all other values of $\lambda$, we truncate the factors of the SVD and so use lower-rank matrices that correspond to $\text{sd}_{\lambda_i}(A)$.

The resulting algorithm is presented in Algorithm \ref{alg:randLowRankTikhonov}. We are able to find a preconditioner in $\mathcal{O}(mn\log m + ms^2)$ operations. It can be applied to a vector $x\in\R^{n}$ by computing
\begin{equation*}
    R^{-1}x = \lambda^{-1/2}(x - VSVx),
\end{equation*}
which requires $2(2ns + s^2 + n)$ operations --- compared to $n^2$ for an upper triangular preconditioner. One of its main advantages is that we only need to compute a preconditioner once, and for any (sufficiently large) $\lambda$, one can truncate this without any additional computational work.

\begin{algorithm2e}
\SetAlgoLined
\KwResult{Given an $m\times n$ matrix $A$, $m> n$, a set of regularization parameters $\lambda_1\geq\lambda_2\geq \dots\geq\lambda_N$,  estimates of $\text{sd}_{\lambda_i}(A)$ for $i=1,\dots, N$,  and an oversampling parameter $\alpha$ such that $\alpha\text{sd}_{\lambda_N}(A) \leq n$, this scheme computes approximate solutions $x_{\lambda_i}^*$ to \ref{eq:regLLSoverIter}.}
\nl Set $s = \alpha\ceil{\text{sd}_{\lambda_N}(A)}$.\\
\nl Draw an $s\times m$ random embedding matrix $X$. \\
 \nl Compute $Y = XA$. \\
 \nl Compute the SVD $[\,\sim, \,\Sigma,\, V \,] = \text{svd}\,(Y, \text{`econ'})$. \\
  \For{$i = 1,2,\dots, N$} {
  	\nl Set $s_i = \alpha\ceil{\text{sd}_{\lambda_i}(A)}$.\\
  	\nl Truncate $V_i = V(:, 1:s_i)$ and $\Sigma_i = \Sigma(1:s_i, 1:s_i)$.\\
  \nl	Form a diagonal matrix $S_i$ with elements $$(S_i)_{jj} = 1 - \sqrt{\frac{1}{1 + \frac{(\Sigma_i)_{jj}^2}{\lambda}}}.$$\\
 \nl Solve the following system with LSQR
 \begin{equation*}
     y^* = \argmin_{y\in\R^n}\left\|\begin{bmatrix} A\\ \sqrt{\lambda_i}I_n\end{bmatrix}R_i^{-1}y - \begin{bmatrix} b\\ 0\end{bmatrix}\right\|^2,
 \end{equation*}
 where $$R_i^{-1} = \lambda_i^{-1/2}(I_n - V_iS_iV_i^T).$$\\
 \nl Return $x_{\lambda_i}^* = R_i^{-1}y^*$.
 }
\caption{Randomized preconditioning for Tikhonov regularization in overdetermined LLS.}
\label{alg:randLowRankTikhonov}
\end{algorithm2e}

An important algorithmic consideration is the estimation of the statistical dimension for the various values of $\lambda$.  This is discussed in the next section.

\subsubsection{Estimating the statistical dimension}
In Algorithm \ref{alg:randLowRankTikhonov} it is necessary to obtain estimates of the statistical dimension for various values of $\lambda$. One approach is to sketch $A$ with an embedding matrix of size $s$ considered to be an upper bound for the statistical dimension. This could be $\min(m,n)$ if no information is known, as the sketching step is not the dominant part of the algorithm. The singular values of $Y = XA$ can then be used as estimates for the leading singular values of $A$, as described in \cite{Meier2021}.  In particular, for each value of $\lambda_i$ we can find an estimate $\hat{\text{sd}}_{\lambda_i}(A)$ for the statistical dimension with
$$\hat{\text{sd}}_{\lambda_i}(A) = \sum_{i = 1}^s \frac{1}{1 + \frac{\lambda_i}{\sigma_i(Y)^2}}.$$

Although we may need to compute the SVD of a matrix with greater dimensions than necessary \emph{once}, a good estimate of $\text{sd}_{\lambda_i}(A)$ for each value of $\lambda_i$ will allow us to truncate the low-rank structure in the preconditioner, making it very cheap to apply.  Note that these estimates come without any additional cost, as computing the SVD of $Y$ is a necessary step in Algorithm \ref{alg:randLowRankTikhonov}.

Other methods to estimate the statistical dimension are using a randomized trace estimator such as described in \cite{Avron2011}. However, this will involve solving a linear system, possibly approximately, as in \cite{Ozaslan2020}. Avron et al. propose a new method in \cite{Avron2017} with which the statistical dimension can be estimated in $\mathcal{O}(\text{nnz}(A))$ time up to a constant factor, yet as is noted in \cite{Ozaslan2020}, this is exclusively applicable in a context of very rapid decay of $\sigma_i(A)$.

\subsection{Underdetermined case}
We can perform a very similar analysis for the underdetermined case. Now assume $s<m<n$ and our embedding matrix $X$ is $m\times s$. We find a tall and skinny sketch
\begin{equation*}
    Y = AX\in\R^{m\times s},
\end{equation*}
and aim to find $R$ such that 
\begin{equation*}
    \RR = YY^T + \lambda I_m.
\end{equation*}
Suppose we have the SVD of $Y$
\begin{equation}
    Y = U\Sigma V^T, \quad U\in\R^{m\times s}, \quad \Sigma\in\R^{s\times s}, \quad V\in\R^{s\times s},
\end{equation}
then
\begin{align*}
    YY^T + \lambda I_n &= U\Sigma^2U^T + \lambda I_n \\
    & = \begin{bmatrix} U & U_{\perp}\end{bmatrix}\begin{bmatrix}
    \Sigma^2 + \lambda I_s & 0 \\ 0 & \lambda I_{n-s}\end{bmatrix}\begin{bmatrix} U^T \\ U_{\perp}^T\end{bmatrix},
\end{align*}
where $U_{\perp}\in\R^{m\times (m-s)}$ is such that $\begin{bmatrix} U & U_{\perp}\end{bmatrix}$ is a square orthogonal matrix. By the exact same reasoning as in the overdetermined case, we find
\begin{equation*}
    R = \sqrt{\lambda}(UFU^T + I_n),\quad F = (\lambda^{-1/2}\Sigma^2 + I_s)^{1/2} - I_s.
\end{equation*}
As for the inverse, we have
\begin{equation}
    R^{-T} = \lambda^{-1/2}(I_m - USU^T), \quad \text{where}\quad  S_{ii} = 1 - \sqrt{\frac{1}{1 + \frac{\sigma_i(Y)^2}{\lambda}}}\,\,.
\end{equation}
Here, $U\in\R^{m\times s}$ and $S\in\R^{s\times s}$ is a diagonal matrix with elements as given above. 

The resulting algorithm is presented in Algorithm \ref{alg:randLowRankTikhonovUnder}.

\begin{algorithm2e}
\SetAlgoLined

\KwResult{Given an $m\times n$ matrix $A$, $m< n$, a set of regularization parameters $\lambda_1\geq\lambda_2\geq\dots\geq\lambda_N$, estimates of $\text{sd}_{\lambda_i}(A)$ for $i=1,\dots, N$,  and an oversampling parameter $\alpha$ such that $ \alpha \text{sd}_{\lambda_N}(A) \leq m$, this scheme computes approximate solutions $x_{\lambda_i}^*$ to \eqref{eq:regLLSunderiter}.}
\nl Set $s = \alpha\ceil{\text{sd}_{\lambda_N}(A)}$.\\
\nl Draw an $n\times s$ random embedding matrix $X$. \\
 \nl Compute $Y = AX$. \\
 \nl Compute the SVD $[U, \,\Sigma,\, \sim\,] = \text{svd}\,(Y, \text{`econ'})$. \\
  \For{$i = 1,2,\dots, N$} {
  	\nl Set $s_i = \alpha\ceil{\text{sd}_{\lambda_i}(A)}$.\\
  	\nl Truncate $U_i = U(:, 1:s_i)$ and $\Sigma_i = \Sigma(1:s_i, 1:s_i)$.\\
  \nl	Form a diagonal matrix $S_i$ with elements $$(S_i)_{jj} = 1 - \sqrt{\frac{1}{1 + \frac{(\Sigma_i)_{jj}^2}{\lambda}}}.$$\\
 \nl Solve the following system with LSQR
 \begin{equation*}
    \begin{bmatrix} x^*_{\lambda_i} \\ y^* \end{bmatrix} = \argmin_{x\in\R^n, y\in\R^{m}} \left\|R_i^{-T}\begin{bmatrix} A & \sqrt{\lambda_i}I \end{bmatrix}\begin{bmatrix} x \\ y \end{bmatrix} - R_i^{-T}b\right\|^2_2,
\end{equation*}
 where $$R_i^{-T} = \lambda_i^{-1/2}(I_m - U_iS_iU_i^T).$$\\
 \nl Return $x_{\lambda_i}^*$.
 }
\caption{Randomized preconditioning for Tikhonov regularization in underdetermined LLS.}
\label{alg:randLowRankTikhonovUnder}
\end{algorithm2e}

\subsection{Convergence analysis}
Throughout this analysis, let $A = U_A\Sigma_AV_A^T$ be the economy-sized SVD of $A$. We again employ the structural conditions proposed in \cite{Chowdhury2018AnRegression}.  The proofs on the condition numbers of the preconditioned matrices are again inspired by the proofs in \cite{Chowdhury2018AnRegression}.  We first show these conditions are equivalent to the conditions used in \cite{Avron2017},  as also follows from Lemma 12 in \cite{Avron2017}. 
\subsubsection{Overdetermined case}
The following lemma relates the condition in \cite{Chowdhury2018AnRegression} to the condition in \cite{Avron2017}, and their relation to the statistical dimension.
\begin{lemma}\label{lemma:conditionsequiv}
Let $U_1$ consist of the first $m$ rows of the left singular vectors of $B$ such that
$$ B = \begin{bmatrix} A \\ \sqrt{\lambda}I \end{bmatrix} = \begin{bmatrix} U_1 \\ U_2 \end{bmatrix}\Sigma_BV_B.$$
Define a diagonal matrix $\Sigma_{\lambda}$ by
$$\Sigma_{\lambda} = \Sigma_A(\Sigma_A^2 + \lambda I)^{-1/2} = (\Sigma_A^2 + \lambda I)^{-1/2}\Sigma_A.$$
For an embedding matrix $X\in\R^{s\times m}$, $s<n<m$, we have
\begin{equation*}
    \|U_1^TX^TXU_1 - U_1^TU_1\| = \|\Sigma_{\lambda}U_A^TX^TXU_A\Sigma_{\lambda} - \Sigma_{\lambda}^2\|.
\end{equation*}
Furthermore
\begin{equation*}
    \|U_1\|_F^2 = \|U_A\Sigma_{\lambda}\|_F^2 = \textnormal{sd}_{\lambda}(A).
\end{equation*}
\end{lemma}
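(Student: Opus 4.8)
The entire statement reduces to one structural identity, namely that the top $m\times n$ block of the left singular factor of $B$ is exactly $U_1 = U_A\Sigma_{\lambda}$. Once this is established, both displayed equalities follow by pure substitution, so the plan is to spend essentially all the effort pinning down this identity and then to treat the two conclusions as short corollaries.

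To derive it, I would first read the SVD of $B$ off that of $A$. Since $m>n$ I take $A$ to have full column rank, so that $V_A\in\R^{n\times n}$ is square orthogonal, and compute
\begin{equation*}
\BB = \AA + \lambda I_n = V_A(\Sigma_A^2 + \lambda I_n)V_A^T.
\end{equation*}
This identifies the right singular vectors of $B$ with the columns of $V_A$ and its singular values with $\Sigma_B = (\Sigma_A^2 + \lambda I_n)^{1/2}$, so that $V_B = V_A^T$ in the lemma's notation. The left singular factor is then $\begin{bmatrix}U_1\\U_2\end{bmatrix} = BV_A\Sigma_B^{-1}$, and reading off the first $m$ rows gives
\begin{equation*}
U_1 = AV_A\Sigma_B^{-1} = U_A\Sigma_A(\Sigma_A^2+\lambda I_n)^{-1/2} = U_A\Sigma_{\lambda},
\end{equation*}
using $V_A^TV_A = I_n$ and the definition of $\Sigma_{\lambda}$.

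With $U_1 = U_A\Sigma_{\lambda}$ in hand, the first identity is immediate: substituting and using $U_A^TU_A = I_n$ yields $U_1^TX^TXU_1 = \Sigma_{\lambda}U_A^TX^TXU_A\Sigma_{\lambda}$ and $U_1^TU_1 = \Sigma_{\lambda}^2$, so the two matrices whose norms are compared are literally equal. For the second identity I would compute $\|U_1\|_F^2 = \|U_A\Sigma_{\lambda}\|_F^2 = \text{tr}(\Sigma_{\lambda}U_A^TU_A\Sigma_{\lambda}) = \text{tr}(\Sigma_{\lambda}^2)$, and then note that the $i$th diagonal entry of $\Sigma_{\lambda}^2$ is $\sigma_i(A)^2/(\sigma_i(A)^2+\lambda) = 1/(1+\lambda/\sigma_i(A)^2)$, whose sum over $i$ is precisely $\text{sd}_{\lambda}(A)$ by Definition \ref{def:statdim}.

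I do not anticipate a real obstacle; the only delicate point is the rank-deficient case $\rank(A)=k<n$, where $V_A$ is no longer square. There the same computation produces $U_1 = U_A\begin{bmatrix}\Sigma_{\lambda} & 0\end{bmatrix}$, the zero block accounting for the $n-k$ directions on which $B$ carries the repeated singular value $\sqrt{\lambda}$. Since this padding alters neither the spectral norm of the relevant difference (it only adds a zero block) nor the Frobenius norm, both identities persist unchanged, and I would record this as a one-line remark rather than repeat the algebra.
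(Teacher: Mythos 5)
Your proposal is correct and follows essentially the same route as the paper's proof: both identify $V_B=V_A$, $\Sigma_B=(\Sigma_A^2+\lambda I)^{1/2}$ from $\BB=\AA+\lambda I$, deduce the key identity $U_1 = AV_A(\Sigma_A^2+\lambda I)^{-1/2}=U_A\Sigma_{\lambda}$, and obtain both displayed equalities by substitution and the Frobenius-norm computation. Your additional remark on the rank-deficient case (where $U_1=[\,U_A\Sigma_{\lambda}\ \ 0\,]$ up to the zero block) is a valid refinement that the paper does not spell out, but it does not change the substance of the argument.
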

\begin{proof}
First note that we have the following for the 
right singular vectors and values of $B$
\begin{equation*}
    \BB = \AA + \lambda I = V_A(\Sigma_A^2 + \lambda I)V_A^T = V_B\Sigma_B^2V_B^T,
\end{equation*}
so $V_B = V_A$ and $\Sigma_B = (\Sigma_A^2 + \lambda I)^{1/2}$.
Then
\begin{equation*}
    \begin{bmatrix} A \\ \sqrt{\lambda}I \end{bmatrix} = \begin{bmatrix} U_1(\Sigma_A^2 + \lambda I)^{1/2}V_A^T \\ U_2(\Sigma_A^2 + \lambda I)^{1/2}V_A^T \end{bmatrix},
\end{equation*}
so that 
\begin{equation*}
    U_1 = AV_A(\Sigma_A^2 + \lambda I)^{-1/2} = U_A\Sigma_A(\Sigma_A^2 + \lambda I)^{-1/2} = U_A\Sigma_{\lambda}.
\end{equation*}
The first result follows; as for the second result, note
\begin{align*}
    \|U_1\|_F^2 = \|\Sigma_{\lambda}\|_F^2 = \|\Sigma_A(\Sigma_A^2 + \lambda I)^{-1/2}\|_F^2 = \sum_{i=1}^n \frac{\sigma^2_i}{\sigma_i^2 + \lambda} = \text{sd}_{\lambda}(A).
\end{align*}
\end{proof}
We use one of these equivalent conditions in the convergence theorems for Algorithms \ref{alg:randLowRankTikhonov} and \ref{alg:randLowRankTikhonovUnder}. 
\begin{theorem}\label{thm:convLRover}
Assume the notation of Lemma \ref{lemma:conditionsequiv} and suppose the following condition holds for some $\epsilon > 0$
\begin{align}
    \|U_1^TX^TXU_1 - U_1^TU_1\| &\leq \epsilon. \label{eq:cond1over}
\end{align}
Let $R$ be such that
\begin{equation*}
    \RR = (XA)^TXA + \lambda I_n.
\end{equation*}
Then
\begin{equation*}
    \kappa(BR^{-1})\leq \sqrt{\frac{1 + \epsilon}{1-\epsilon}}.
\end{equation*}
\end{theorem}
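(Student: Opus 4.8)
The plan is to reduce everything to a two-sided Loewner bound between $\RR$ and $\BB$, and then convert this into the claimed bound on $\kappa(BR^{-1})$ by a congruence argument. First I would record that $\BB=\AA+\lambda I_n$ and $\RR=\XAXA+\lambda I_n$, so that, writing the economy SVD $A=U_A\Sigma_AV_A^T$,
\begin{equation*}
\RR-\BB = A^T(X^TX-I_m)A = V_A\Sigma_A\big(U_A^TX^TXU_A-I_n\big)\Sigma_A V_A^T.
\end{equation*}
The goal of the first stage is to establish the sandwich
\begin{equation*}
(1-\epsilon)\BB \preceq \RR \preceq (1+\epsilon)\BB,
\end{equation*}
which is equivalent to $-\epsilon\BB \preceq \RR-\BB \preceq \epsilon\BB$.

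Second, I would strip this target down to the hypothesis by two congruences. Since $V_A$ is $n\times n$ orthogonal and $\BB=V_A\Sigma_B^2V_A^T$ with $\Sigma_B^2=\Sigma_A^2+\lambda I$, congruence by $V_A$ converts the target into $-\epsilon\Sigma_B^2 \preceq \Sigma_A(U_A^TX^TXU_A-I)\Sigma_A \preceq \epsilon\Sigma_B^2$. Then, using $\Sigma_A=\Sigma_\lambda\Sigma_B$ (which holds because $\Sigma_\lambda=\Sigma_A(\Sigma_A^2+\lambda I)^{-1/2}=\Sigma_A\Sigma_B^{-1}$ and all factors are diagonal, hence commute), congruence by the positive definite diagonal matrix $\Sigma_B^{-1}$ reduces the target to
\begin{equation*}
\big\|\Sigma_\lambda\big(U_A^TX^TXU_A-I\big)\Sigma_\lambda\big\| \leq \epsilon.
\end{equation*}
By Lemma \ref{lemma:conditionsequiv} the left-hand side equals $\|U_1^TX^TXU_1-U_1^TU_1\|$, which is $\leq\epsilon$ by the hypothesis \eqref{eq:cond1over}. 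This closes the first stage.

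Third, I would convert the Loewner sandwich into the condition-number bound. Applying the congruence $M\mapsto R^{-T}MR^{-1}$, which preserves $\preceq$, to $(1-\epsilon)\BB\preceq\RR\preceq(1+\epsilon)\BB$ and using $R^{-T}\RR R^{-1}=I_n$ gives
\begin{equation*}
\tfrac{1}{1+\epsilon}I_n \preceq (BR^{-1})^T(BR^{-1}) \preceq \tfrac{1}{1-\epsilon}I_n.
\end{equation*}
Hence every singular value of $BR^{-1}$ lies in $[\,(1+\epsilon)^{-1/2},(1-\epsilon)^{-1/2}\,]$, and taking the ratio of the extreme values yields $\kappa(BR^{-1})\leq\sqrt{(1+\epsilon)/(1-\epsilon)}$, as claimed. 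I expect the only real care to be in the bookkeeping of the two congruences in the second stage — in particular getting $\Sigma_A=\Sigma_\lambda\Sigma_B$ right so that weighting by $\Sigma_B^{-1}$ exactly collapses the $\Sigma_A$'s down to $\Sigma_\lambda$ — and in invoking Lemma \ref{lemma:conditionsequiv} to translate the $\Sigma_\lambda(\cdots)\Sigma_\lambda$ quantity into the $U_1$-condition. There is no genuinely hard analytic step: everything is exact manipulation in the Loewner order, closely mirroring the proof of Lemma \ref{lemma:convcholover}.
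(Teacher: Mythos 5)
Your proof is correct and is essentially the paper's own argument: both establish the Loewner sandwich $(1-\epsilon)\BB \preceq \RR \preceq (1+\epsilon)\BB$ from condition \eqref{eq:cond1over} via Lemma \ref{lemma:conditionsequiv}, and then convert it into two-sided singular-value bounds on $BR^{-1}$. The only difference is organizational: you run the congruences backward (reducing the target to the hypothesis, staying in SVD coordinates with $\Sigma_B^{-1}$), while the paper works forward from the hypothesis, multiplying by $V_A$ and then by $(\AA+\lambda I)^{1/2}$.
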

\begin{proof}
We start from condition \eqref{eq:cond1over} to find 
\begin{equation*}
    -\epsilon I \preceq \Sigma_{\lambda}U_A^TX^TXU_A\Sigma_{\lambda} - \Sigma_{\lambda}^2 \preceq \epsilon I.
\end{equation*}
Multiply these inequalities by $V_A$ from the left and $V_A^T$ from the right and use the fact that
$$V_A\Sigma_{\lambda}U_A^T = V_A(\Sigma_A^2 + \lambda I)^{-1/2}V_AV_A^T\Sigma_A = (\AA + \lambda I)^{-1/2}A^T,$$
to obtain
\begin{align*}
    -\epsilon I\preceq (\AA + \lambda I)^{-1/2}A^TX^TXA(\AA + \lambda I)^{-1/2} - \\
    (\AA + \lambda I)^{-1/2}\AA(\AA + \lambda I)^{-1/2} \preceq \epsilon I.
\end{align*}
By multiplying with $(\AA + \lambda I)^{1/2}$ on either side, we find
\begin{equation*}
    -\epsilon \BB\preceq A^TX^TXA - \AA \preceq \epsilon \BB,
\end{equation*}
which is equivalent to
\begin{equation*}
    -\epsilon \BB\preceq A^TX^TXA +\lambda I - (\AA + \lambda I) \preceq \epsilon \BB.
\end{equation*}
We then have
\begin{equation*}
    (1-\epsilon)\BB\preceq \RR \preceq (1+\epsilon) \BB,
\end{equation*}
which results in the following inequalities
\begin{equation*}
    (1+\epsilon)^{-1/2} \leq \sigma_{\min}(BR^{-1}) \leq \sigma_{\max}(BR^{-1}) \leq (1-\epsilon)^{-1/2}.
\end{equation*}
\end{proof}
As mentioned previously, the number of iterations necessary to reach $\varepsilon$ accuracy with LSQR (or conjugate gradient) depends on the condition number of the preconditioned matrix. Specifically, after $\ell$ iterations the ($(BR^{-1})\!^TBR^{-1}$-norm) error is reduced at least by a factor $	\left(\frac{\sqrt{\kappa(BR^{-1})} - 1}{\sqrt{\kappa(BR^{-1}) + 1}}\right)^\ell$~\cite[\S~11.3]{Golub2013}. 
Therefore, with the Cholesky-based preconditioner, Theorem~\ref{thm:convLRover} shows that 
$\ell \geq  \log\left(\frac{(1+\epsilon)^{1/4} + (1-\epsilon)^{1/4}}{(1+\epsilon)^{1/4} - (1-\epsilon)^{1/4}}\right)\log(\frac{1}{\varepsilon})$ iterations suffice for $\varepsilon$-convergence, which is $\mathcal{O}(\frac{1}{\varepsilon})$ if $\epsilon$ is not too large, say $\epsilon<0.8$, as is commonly the case.

\subsubsection{Underdetermined case}
We obtain analogous results for the underdetermined case. 

\begin{lemma}\label{lemma:conditionsequiv2}
Let $A\in\R^{m\times n}$, $m\leq n$, 
and $A= U_A\Sigma_AV_A^T$ be its SVD.
Let $V_1$ consist of the first $m$ rows of the right singular vectors of $D$ such that
$$ D = \begin{bmatrix} A & \sqrt{\lambda}I \end{bmatrix} = U_D\Sigma_D\begin{bmatrix} V_1^T & V_2^T \end{bmatrix}.$$
Define a diagonal matrix $\Sigma_{\lambda}$
$$\Sigma_{\lambda} = \Sigma_A(\Sigma_A^2 + \lambda I)^{-1/2} = (\Sigma_A^2 + \lambda I)^{-1/2}\Sigma_A.$$
For an embedding matrix $X\in\R^{n\times s}$, $s<m<n$, we have
\begin{equation*}
    \|V_1^TXX^TV_1 - V_1^TV_1\| = \|\Sigma_{\lambda}V_A^TXX^TV_A\Sigma_{\lambda} - \Sigma_{\lambda}^2\|.
\end{equation*}
Furthermore,
\begin{equation*}
    \|V_1\|_F^2 = \|V_A\Sigma_{\lambda}\|_F^2 = \text{sd}\,_{\lambda}(A).
\end{equation*}
\end{lemma}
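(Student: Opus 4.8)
The plan is to mirror the proof of Lemma~\ref{lemma:conditionsequiv}, transposing every step: where that argument examined the Gram matrix $B^TB$ together with the \emph{left} singular vectors of $B$ and solved for the left factor $U_1$, here I would examine $DD^T$ together with the left singular vectors of $D$ and instead solve for the \emph{right} factor $V_1$.

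First I would compute $DD^T = AA^T + \lambda I_m$. Since the economy SVD $A = U_A\Sigma_A V_A^T$ of the short, wide matrix ($m\leq n$) has $U_A\in\R^{m\times m}$ orthogonal and $V_A\in\R^{n\times m}$ with orthonormal columns, we have $V_A^TV_A = I_m$ and hence $AA^T = U_A\Sigma_A^2 U_A^T$, giving $DD^T = U_A(\Sigma_A^2 + \lambda I)U_A^T$. Comparing this with $DD^T = U_D\Sigma_D^2 U_D^T$ identifies the left singular factors of $D$ as $U_D = U_A$ and $\Sigma_D = (\Sigma_A^2 + \lambda I)^{1/2}$. Next I would read off $V_1$ by matching the leading block of $D = U_D\Sigma_D[\,V_1^T \;\; V_2^T\,]$: the block equal to $A$ yields $A = U_A(\Sigma_A^2 + \lambda I)^{1/2}V_1^T$, which I would solve as $V_1^T = (\Sigma_A^2 + \lambda I)^{-1/2}U_A^T A = (\Sigma_A^2 + \lambda I)^{-1/2}\Sigma_A V_A^T = \Sigma_\lambda V_A^T$, so that $V_1 = V_A\Sigma_\lambda$ (using that $\Sigma_\lambda$ is diagonal).

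The two claimed identities then follow by direct substitution. For the first, $V_1 = V_A\Sigma_\lambda$ gives $V_1^TXX^TV_1 = \Sigma_\lambda V_A^TXX^TV_A\Sigma_\lambda$ and $V_1^TV_1 = \Sigma_\lambda V_A^TV_A\Sigma_\lambda = \Sigma_\lambda^2$, so the two sides of the norm identity agree. For the Frobenius-norm identity I would compute $\|V_1\|_F^2 = \|V_A\Sigma_\lambda\|_F^2 = \mathrm{tr}(\Sigma_\lambda V_A^TV_A\Sigma_\lambda) = \mathrm{tr}(\Sigma_\lambda^2) = \sum_i \sigma_i^2/(\sigma_i^2+\lambda)$, which equals $\text{sd}_\lambda(A)$ by Definition~\ref{def:statdim}.

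Since each step is the transpose of an already-verified step in Lemma~\ref{lemma:conditionsequiv}, I do not expect a genuine obstacle; the only point requiring care is the dimensional bookkeeping. Because $A$ is now short and fat, $DD^T$ (rather than $D^TD$) is the natural $m\times m$ object to diagonalize, and it is $V_1$ (rather than $U_1$) that must be solved for. Keeping $U_A\in\R^{m\times m}$ and $V_A\in\R^{n\times m}$ straight throughout is the one place a sign or a dimension could slip.
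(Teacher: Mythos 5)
Your proposal is correct and is precisely the argument the paper intends: the paper omits the proof of Lemma~\ref{lemma:conditionsequiv2} as the transposed analogue of Lemma~\ref{lemma:conditionsequiv}, and your derivation ($DD^T = U_A(\Sigma_A^2+\lambda I)U_A^T$, hence $U_D = U_A$, $\Sigma_D = (\Sigma_A^2+\lambda I)^{1/2}$, and $V_1 = V_A\Sigma_\lambda$, from which both identities follow by substitution) is exactly that analogue. Note only that $V_1$ must be read as the first $n$ (not $m$) rows of the right singular vector matrix of $D$, as your bookkeeping implicitly and correctly assumes.
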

This results in the following theorem on the quality of the preconditioner.

\begin{theorem}
Assume the notation of Lemma \ref{lemma:conditionsequiv2} and suppose the following equivalent condition holds for some $\epsilon > 0$
\begin{align}
    \|V_1^TXX^TV_1 - V_1^TV_1\| &\leq \epsilon \label{eq:cond1under}
\end{align}
Let $R$ be such that
\begin{equation*}
    \RR = AX(AX)^T + \lambda I_m.
\end{equation*}
Then
\begin{equation*}
    \kappa(R^{-T}D)\leq \sqrt{\frac{1 + \epsilon}{1-\epsilon}}\,.
\end{equation*}
\end{theorem}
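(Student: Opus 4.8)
The plan is to run the argument of Theorem~\ref{thm:convLRover} almost verbatim, but with the roles dualized for the underdetermined, left-preconditioned setting: $U_A$ takes the place of $V_A$, the Gram matrix $AA^T$ (equivalently $DD^T$) replaces $\BB$, and $R^{-T}D$ replaces $BR^{-1}$. First I would invoke Lemma~\ref{lemma:conditionsequiv2} to rewrite the hypothesis \eqref{eq:cond1under} as the equivalent statement $\|\Sigma_{\lambda}V_A^TXX^TV_A\Sigma_{\lambda} - \Sigma_{\lambda}^2\| \leq \epsilon$, and then read this spectral-norm bound as the two-sided Loewner inequality
\begin{equation*}
-\epsilon I \preceq \Sigma_{\lambda}V_A^TXX^TV_A\Sigma_{\lambda} - \Sigma_{\lambda}^2 \preceq \epsilon I.
\end{equation*}

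The computational heart of the proof is the identity
\begin{equation*}
U_A\Sigma_{\lambda}V_A^T = (AA^T + \lambda I)^{-1/2}A,
\end{equation*}
which is the underdetermined analogue of $V_A\Sigma_{\lambda}U_A^T = (\AA + \lambda I)^{-1/2}A^T$ and follows at once from $A = U_A\Sigma_A V_A^T$ and $\Sigma_{\lambda} = \Sigma_A(\Sigma_A^2 + \lambda I)^{-1/2}$. I would conjugate the Loewner inequality by $U_A$ on the left and $U_A^T$ on the right; using this identity, together with the companion fact $U_A\Sigma_{\lambda}^2 U_A^T = (AA^T + \lambda I)^{-1/2}AA^T(AA^T + \lambda I)^{-1/2}$, this turns the inequality into a sandwich for $(AA^T + \lambda I)^{-1/2}AXX^TA^T(AA^T + \lambda I)^{-1/2}$ around its noise-free counterpart. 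Multiplying on both sides by $(AA^T + \lambda I)^{1/2}$ to clear the fractional powers and then adding $\lambda I$ to each part yields
\begin{equation*}
(1-\epsilon)\,DD^T \preceq \RR \preceq (1+\epsilon)\,DD^T,
\end{equation*}
where $\RR = AXX^TA^T + \lambda I_m$ and $DD^T = AA^T + \lambda I_m$.

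To finish, I would conjugate this chain by $R^{-T}$ on the left and $R^{-1}$ on the right. Because $R^{-T}(\RR)R^{-1} = I$, the middle term collapses and the inequality sandwiches $(R^{-T}D)(R^{-T}D)^T = R^{-T}DD^TR^{-1}$ between $(1+\epsilon)^{-1}I$ and $(1-\epsilon)^{-1}I$; taking square roots of the extreme eigenvalues then gives $(1+\epsilon)^{-1/2} \leq \sigma_{\min}(R^{-T}D) \leq \sigma_{\max}(R^{-T}D) \leq (1-\epsilon)^{-1/2}$, and dividing the two bounds produces the claimed $\kappa(R^{-T}D) \leq \sqrt{(1+\epsilon)/(1-\epsilon)}$. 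No step presents a genuine obstacle, since the Loewner order is invariant under congruence at every stage; the only place demanding care---and the one I would check twice---is the dualized bookkeeping forced by left-preconditioning, namely that the relevant Gram matrix is $DD^T$ rather than $\BB$, that the singular values of interest come from $(R^{-T}D)(R^{-T}D)^T$, and that the governing identity is built from $U_A$ instead of $V_A$.
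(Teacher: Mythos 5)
Your proposal is correct and is essentially the argument the paper intends: the paper omits the proof of this theorem precisely because it is the verbatim dualization of Theorem~\ref{thm:convLRover}, which is exactly what you carry out (hypothesis rewritten via Lemma~\ref{lemma:conditionsequiv2}, conjugation by $U_A$ using $U_A\Sigma_{\lambda}V_A^T = (AA^T+\lambda I)^{-1/2}A$, clearing the fractional powers to get $(1-\epsilon)DD^T \preceq \RR \preceq (1+\epsilon)DD^T$, then congruence by $R^{-T}$). Your bookkeeping of the dual quantities ($DD^T$ in place of $\BB$, singular values read off from $(R^{-T}D)(R^{-T}D)^T$) is accurate, so nothing further is needed.
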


\subsection{Sketching matrices}
The relationship between condition \eqref{eq:cond1over} and the statistical dimension of the problem can be explained by random matrix multiplication bounds in terms of the stable rank \cite{Cohen2016}. See for instance Corollary 15 and its proof in \cite{Avron2017}. The following theorem describes the size that is required for various oblivious subspace embeddings to obtain a desired level of accuracy.

\begin{theorem}[Ozaslan et al. (2020) \cite{Ozaslan2020}] Let $A\in\R^{m\times n}$, $m\geq n$, and $\lambda>0$ have statistical dimension $\textnormal{sd}_{\lambda}(A)$. Let $U_1$ consist of the first $m$ rows of the left singular vectors of $B = [A^T \,\, \sqrt{\lambda}I_n]^T$ such that
$$ B = \begin{bmatrix} A \\ \sqrt{\lambda}I \end{bmatrix} = \begin{bmatrix} U_1 \\ U_2 \end{bmatrix}\Sigma_BV_B.$$ The condition $$\|U_1^TX^TXU_1 - U_1^TU_1\|\leq \epsilon$$ for an embedding matrix $X\in\R^{s\times m}$ is satisfied with probability at least $1-\delta$ in the following cases:
\begin{itemize}
    \item $X$ is a Sparse Subspace Embedding \cite{Woodruff2014} with one nonzero element in each column and $$s = \Omega(\text{sd}_{\lambda}(A)^2/(\epsilon^2\delta)).$$
    \item $X$ is a Subsampled Randomized Trigonometric Transform \cite{Martinsson2020} and $$s = \Omega((\text{sd}_{\lambda}(A) + \log(1/\epsilon\delta)\log(\text{sd}_{\lambda}(A)/\delta))/\epsilon^2).$$
    \item $X$ is a Sub-Gaussian embedding \cite{Vershynin2012IntroductionMatrices} and $$s = \Omega((\text{sd}_{\lambda}(A)/\epsilon^2)).$$
\end{itemize}
Here, the $\Omega(\cdot)$ notation is defined as $a(n) = \Omega(b(n))$, if there exist two integers $k$ and $n_0$ such that for all $n > n_0$ we have $a(n) \geq k b(n)$.
\end{theorem}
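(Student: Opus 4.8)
The plan is to reduce all three cases to a single \emph{approximate matrix multiplication} (AMM) guarantee in the spectral norm, and then invoke the sharpest available sketch-size bound for each embedding family. The crucial observation is that the complexity parameter that governs the required number of rows $s$ is not the rank of $U_1$ but its \emph{stable rank}, which is controlled by $\textnormal{sd}_\lambda(A)$.

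First I would note that the hypothesis $\|U_1^TX^TXU_1 - U_1^TU_1\|\leq\epsilon$ is exactly the statement that the sketch approximately preserves the Gram matrix of $U_1$, i.e.\ $\|(XU_1)^T(XU_1) - U_1^TU_1\|\leq\epsilon$. Since $U_1$ consists of a subset of rows of a matrix with orthonormal columns we have $\|U_1\|_2\leq 1$, and by Lemma~\ref{lemma:conditionsequiv} we have $\|U_1\|_F^2=\textnormal{sd}_\lambda(A)$. Writing the target accuracy in the relative form used by spectral AMM bounds, $\epsilon=\hat\epsilon\,\|U_1\|_2^2$ with $\hat\epsilon=\epsilon/\|U_1\|_2^2$, the stable-rank bound $s=\Omega(\|U_1\|_F^2/(\|U_1\|_2^2\hat\epsilon^2))$ collapses, after substituting $\hat\epsilon$ and using $\|U_1\|_2\leq 1$, to $s=\Omega(\textnormal{sd}_\lambda(A)\,\|U_1\|_2^2/\epsilon^2)\leq\Omega(\textnormal{sd}_\lambda(A)/\epsilon^2)$. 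This is the mechanism by which the statistical dimension replaces the ambient dimension, and it is where all of the $A$-dependence is absorbed.

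Next I would handle the three embeddings separately, matching each to the strongest spectral AMM result it supports. For the sub-Gaussian case I would apply the optimal stable-rank AMM bound of Cohen--Nelson--Woodruff~\cite{Cohen2016} (equivalently, a matrix-Bernstein or $\epsilon$-net argument over the column space of $U_1$), which yields the clean $s=\Omega(\textnormal{sd}_\lambda(A)/\epsilon^2)$ after the reduction above. For the SRTT I would invoke its oblivious-subspace-embedding property together with the chaining refinement that upgrades entrywise concentration to a spectral bound; the additive $\log(1/\epsilon\delta)\log(\textnormal{sd}_\lambda(A)/\delta)$ term arises precisely from the net and union bound required by that upgrade. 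For the sparse (CountSketch) embedding the natural guarantee is only a \emph{Frobenius}-norm AMM, so I would pass to the spectral norm through the stable rank, which costs a square and explains the $\textnormal{sd}_\lambda(A)^2$ scaling, and I would obtain the failure probability from a second-moment (Markov) estimate rather than a concentration inequality, which is the source of the polynomial $1/\delta$ dependence.

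The main obstacle is the sparse case: because CountSketch offers only second-moment control, one cannot simultaneously obtain a spectral-norm bound and a logarithmic dependence on $\delta$, and keeping the stable-rank-squared constant sharp through the Frobenius-to-spectral conversion is the delicate step. The SRTT chaining argument is the second most technical piece, whereas the sub-Gaussian case follows almost immediately from the reduction together with a standard net argument. In every case the statistical dimension enters solely through $\|U_1\|_F^2=\textnormal{sd}_\lambda(A)$ from Lemma~\ref{lemma:conditionsequiv}, so no structural property of $A$ beyond its singular-value decay is needed.
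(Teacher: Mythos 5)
The paper does not actually prove this theorem---it is quoted from Ozaslan et al.~\cite{Ozaslan2020}, and the only justification the paper offers is the pointer, in the sentence preceding the statement, to spectral-norm approximate-matrix-multiplication bounds in terms of the stable rank (\cite{Cohen2016}, and Corollary~15 of \cite{Avron2017}). Your sketch follows precisely that route---reducing the condition $\|U_1^TX^TXU_1 - U_1^TU_1\|\leq\epsilon$ to stable-rank AMM with all $A$-dependence entering through $\|U_1\|_F^2 = \textnormal{sd}_{\lambda}(A)$ and $\|U_1\|_2\leq 1$ (Lemma~\ref{lemma:conditionsequiv}), then invoking the per-embedding guarantees (second-moment/Markov with the Frobenius-to-spectral conversion explaining the $\textnormal{sd}_{\lambda}(A)^2/\delta$ factor for the sparse case, chaining for the SRTT logarithms, and the optimal sub-Gaussian bound)---so it matches the paper's intended justification and the way the cited sources establish the result.
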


One could also use a subsampling matrix based on leverage scores to satisfy the condition of the theorem, see for instance \cite{Chowdhury2018AnRegression}.

\section{Numerical experiments}
In this section we present numerical experiments to investigate the performance of the algorithms we have introduced. We focus on the overdetermined case ($m\gg n$) and real matrices.  The matrices are constructed as the product of two orthogonal matrices, created as the orthogonal factors of a Gaussian matrix, and a diagonal matrix with the singular values.  Note this results in an incoherent, i.e. easy to sketch, matrix. We set $x\in\R^{n}$ to be a random vector with standard normal entries and then compute $b = Ax + \eta$, where $\eta$ is random noise with approximate norm $\|\eta\|_2 = 10^{-3}$.  

We firstly compare our proposed algorithms to existing methods. Secondly, we investigate the effect of the type of embedding. Finally, in Section \ref{sec:lcurve} we show how our algorithms can be used to find the optimal regularization parameter and resulting solution.

\subsection{Comparison to other methods}\label{sec:expcomp}
We compare our proposed Algorithms \ref{alg:rankCholTikhonovover} and \ref{alg:randLowRankTikhonov} to other randomized LLS solvers.  In particular, we consider two variants of the Blendenpik \cite{Rokhlin2008, Avron2011} algorithm that we label BP1 and BP2 in Figure \ref{fig:comp}. BP1 refers to Blendenpik applied to the full matrix $B$, as suggested in \cite{Iyer2017}. That is, the preconditioner $R_{\text{BP1}}$ is such that 
$$QR_{\text{BP1}} = X\begin{bmatrix}A \\ \sqrt{\lambda} I_n\end{bmatrix}, \quad X\in\R^{s\times (m+n)}.$$
BP2 refers to Blendenpik combined with a `partly exact' sketch. In exact arithmetic this results in the same preconditioner as Algorithm \ref{alg:rankCholTikhonovover}; yet in BP2 it is obtained with a QR decomposition.  The preconditioner $R_{\text{BP2}}$ is such that
$$QR_{\text{BP2}} = \begin{bmatrix}XA \\ \sqrt{\lambda} I_n\end{bmatrix}, \quad X\in\R^{s\times m}.$$
The sketch-to-precondition methods are all combined with LSQR and relative tolerance $10^{-6}$.
Finally, we compare with the Kernel Ridge Regression algorithm in \cite{Avron2010a}, which preconditions the normal equations with a matrix of low-rank structure $P_{KRR}$ given by
$$R_{KRR} = \text{chol}\,(XA(XA)^T + \lambda I_s), \quad U = (XA)^TR_{KRR}^{-1}, \quad P_{KRR}^{-1} = \frac{1}{\lambda}(I - UU^T),$$
where $X\in\R^{s\times m}$ and $s < n$. This preconditioner is applied to solve $$(\AA + \lambda I)x = A^Tb$$ with preconditioned conjugate gradients.

\begin{figure}
	\centering
    \includegraphics[width = \linewidth]{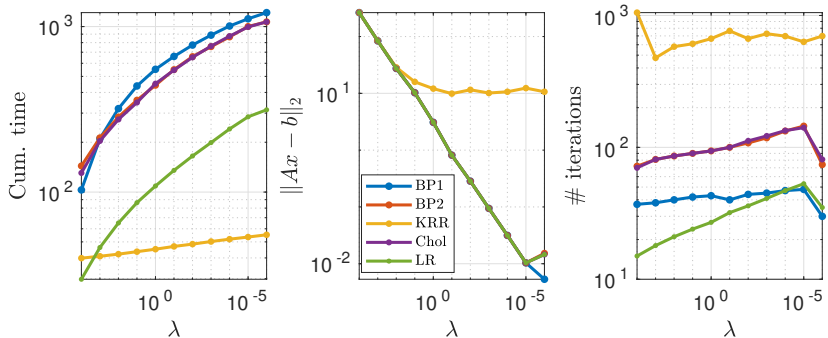}
    \caption{We compare various methods to solve the regularized overdetermined problem \eqref{eq:regLLSoverIter} for multiple values of $\lambda$.  The cum.  time ($y$-axis) displays the cumulative computing time necessary to loop through an extra value of $\lambda$.  BP1, BP2 and KRR are explained in Section \ref{sec:expcomp}. The final two methods, Chol and LR, refer to Algorithms \ref{alg:rankCholTikhonovover} and \ref{alg:randLowRankTikhonov}.  For BP 1, BP 2 and Chol we choose $s = 5n$ and a subsampled randomized DCT embedding , for KRR and LR we choose $s = 2\ceil{\text{sd}_{\lambda}(A)}$ and a Gaussian embedding. The matrix $A$ is $10^6\times 2500$ with singular values decaying from $10^4$ to $10^{-60}$ exponentially. }
    \label{fig:comp}
\end{figure}

The results of the comparison can be seen in Figure \ref{fig:comp}. Although KRR is the fastest method, we see that the instability due do ill-conditioning of $A$ results in inaccurate results.  This effect is avoided in the other methods as they precondition $A$ directly, instead of preconditioning the normal equations.  Additionally, we see the low-rank (LR) preconditioner from Algorithm \ref{alg:randLowRankTikhonov} significantly outperforms algorithms that do not exploit the low statistical dimension of the problem.  The difference between BP2 and the Cholesky-based Algorithm \ref{alg:rankCholTikhonovover} is not clear from Figure \ref{fig:comp}; we explore this further in Figure \ref{fig:compCholQR}.

\begin{figure}
\centering
\includegraphics[width=0.9\linewidth]{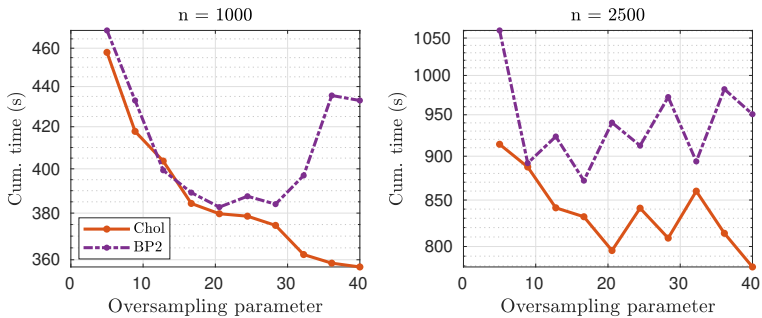}
\caption{We compare the performance of BP2 (see the start of Section \ref{sec:expcomp}) and Algorithm \ref{alg:rankCholTikhonovover}. The oversampling parameters refers to the size of the sketching dimension $s$ relative to $n$. That is, an oversampling parameter of 20 implies $s = 20n$. For each oversampling parameter, we compute the solution of \eqref{eq:regLLSoverIter} for fifteen values of $\lambda$. The cumulative time necessary to solve fifteen problems is displayed on the vertical axis. The matrix dimensions are $m=500000$ and $n=1000,2500$ respectively for the left and right plot.  The matrix has singular values exponentially decaying from $10^5$ to $10^{-5}$.}
\label{fig:compCholQR}
\end{figure}

Figure \ref{fig:compCholQR} shows how BP2 compares to Algorithm \ref{alg:rankCholTikhonovover}.  Both algorithms compute a preconditioner that is equivalent (in exact arithmetic), yet there is a difference in the computations. In the pre-processing step, BP2 only sketches $A$ to obtain $Y = XA\in\R^{s\times n}$ while Algorithm \ref{alg:rankCholTikhonovover} also computes $C= \YY$. However, for each value of $\lambda$ considered BP2 must compute the QR decomposition of an $(s+n)\times n$ matrix whereas in Algorithm \ref{alg:rankCholTikhonovover} we compute the Cholesky decomposition of an $n\times n$ matrix. Figure \ref{fig:compCholQR} shows clearly that for large values of $s$ this results in speed-ups. 

\subsection{Comparison of sketching matrices}
We next consider the effect of the type of sketching matrix used on the quality of the preconditioners obtained in Figure \ref{fig:timing1}. The figure shows that, although the sketching dimension of the SRTT matrices used is much larger than the sketching dimension of Gaussian embeddings, Gaussian embeddings produce higher quality preconditioners in the sense that $\kappa(BR^{-1})$ is smaller.  This also has as an effect that a smaller number of LSQR iterations is necessary.  The best choice will also depend on the number of regularization parameters $\lambda$ one wishes to consider.  If a large number of problems is to be solved, the reduction in the number of iterations due to a larger sketch size may improve the computational time sufficiently to compensate for the larger computing time in the sketching step.

\begin{figure}
    \hspace{-.7cm}
    \includegraphics[width = 1.1\linewidth]{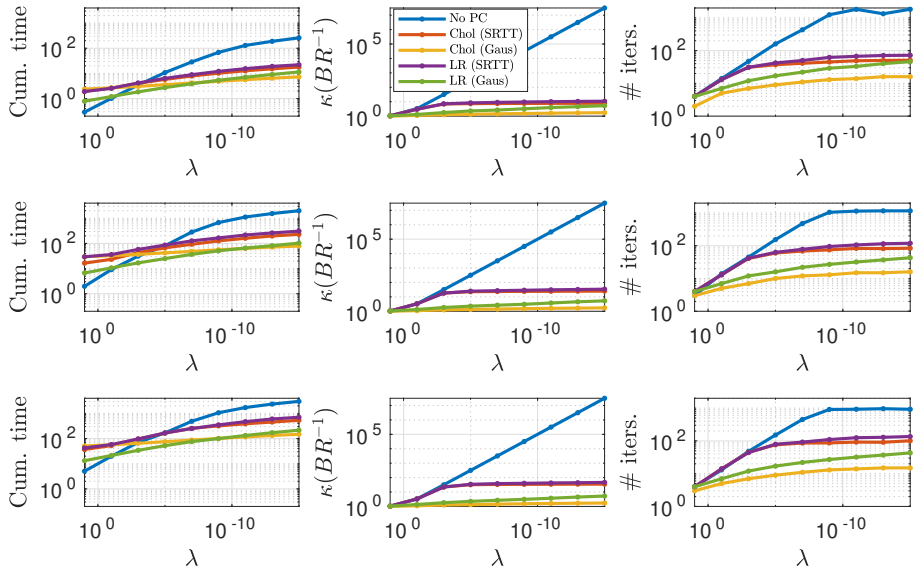}
    \caption{The rows correspond to matrices of sizes $m = 50000, 500000, 1000000$ respectively. In each case we choose $n = 1000$. The same problem is solved with 1) LSQR without preconditioning, 2) LSQR with the Cholesky preconditioner and an SRTT embedding matrix, 3) LSQR with the Cholesky preconditioner and a Gaussian embedding matrix, 4) LSQR with the low-rank preconditioner and an SRTT embedding matrix and 5) LSQR with the low rank preconditioner and a Gaussian embedding matrix. For the Cholesky preconditioner we choose $s = 10n$ and $s = 2n$ respectively for the SRTT and Gaussian embeddings, and for the low rank preconditioner we choose $s = \min(n,10\ceil{\text{sd}_{\lambda}(A)})$ and $s = 2\ceil{\text{sd}_{\lambda}(A)}$ for the SRTT and Gaussian embeddings. The singular values of $A$ decay exponentially from 1 to $10^{-50}$. The regularization parameters $\lambda_i$ range exponentially from 10 to $10^{-15}$. The timings shown are the cumulative timings for each additional value of $\lambda$ considered. We also plot the condition number of the (preconditioned) matrix and the number of iterations needed until convergence.}
    \label{fig:timing1}
\end{figure}

\subsection{L-curve to optimise the regularization parameter}\label{sec:lcurve}
Finally, we show how our algorithms can be used to compute the optimal regularization parameter and its corresponding solution. We particularly consider the method of L-curves \cite{Hansen2001TheProblems}, where for a set of regularization parameters the norm of the solution $\|x_{\lambda_i}\|_2$ is plotted against the residual $\|Ax_{\lambda_i} - b\|_2$ in a log-log plot. The plot should show an L-like shape, and the optimal regularization parameters correspond to the corner. In the example in Figure \ref{fig:lcurve} we see the optimal value is $\lambda^* = 10^{-3}$, and the computed solution was already computed. In addition to much better execution time for our algorithms as compared to LSQR without preconditioning, we see that the preconditioning allows us to compute more accurate solutions for larger $\lambda$.

\begin{figure}
    \centering
    \includegraphics[width = \linewidth]{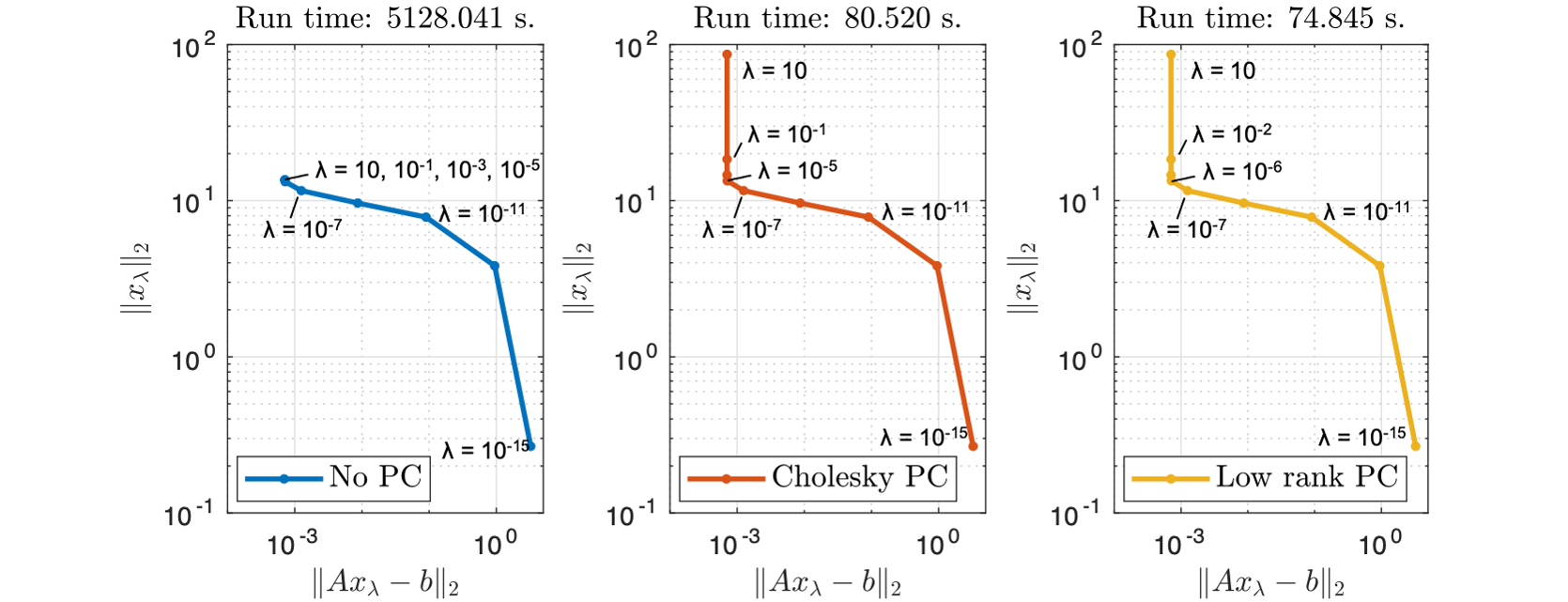}
    \caption{The L-curves \cite{Hansen2001TheProblems} and running times for the same problem solved with 1) LSQR without preconditioning (left),  2) LSQR with the Cholesky preconditioner (middle), and 3) LSQR with the low rank preconditioner (right).  For the Cholesky preconditioner we choose $s = 2n$, and for the low rank preconditioner we choose $s = 2\,\text{sd}_{\lambda}(A)$. The problem dimensions are $m=500000$ and $n = 2000$. The singular values of $A$ decay exponentially from 1 to $10^{-50}$. The regularization parameters $\lambda_i$ range exponentially from 10 to $10^{-15}$. }
    \label{fig:lcurve}
\end{figure}

\bibliography{references} 
\bibliographystyle{acm}

\end{document}